\renewcommand*{\backrefalt}[4]{%
    \ifcase #1 (Not cited.)%
    \or       (Cited on page {\textcolor{black}{#2}}.)
    \else     (Cited on pages {\textcolor{black}{#2}}.)
    \fi}
\renewcommand{\boldsymbol}{\mathbf}
\newtheorem{theorem}{Theorem}[section]
\newtheorem{proposition}[theorem]{Proposition}
\newtheorem{lemma}[theorem]{Lemma}
\newtheorem*{main}{Main Theorem}
\theoremstyle{definition}
\newtheorem{remark}[theorem]{Remark}
\newcommand{\lr}{\longrightarrow}
\newcommand{\M}{\mathcal{M}}
\newcommand{\ZZ}{\mathbb{Z}}
\newcommand{\CC}{\mathbb{C}}
\newcommand{\PP}{\mathbb{P}}
\newcommand{\oo}{\mathcal{O}}
\renewcommand{\to}{\longrightarrow}
\title{A family of surfaces with $p_g=q=2, \, K^2=7$ and Albanese map of degree $3$}
\author{Roberto Pignatelli and Francesco Polizzi}
\date{}
\begin{document}
\maketitle
%


\begin{abstract}
We study a family of surfaces of general type with $p_g=q=2$ and $K^2=7$, originally constructed by Cancian and Frapporti by using the Computer Algebra System \verb|MAGMA|. We provide an alternative, computer-free construction of these surfaces, that allows us to describe their Albanese map and their moduli space.
\end{abstract}


\Footnotetext{{}}{2010 \textit{Mathematics Subject Classification}:
14J29, 14J10}

\Footnotetext{{}} {\textit{Keywords}: Surface of general type,
Albanese map, triple cover}


\setcounter{section}{-1}

\section{Introduction} \label{sec:intro}

In recent years, the work of several authors on the classification of \emph{irregular} algebraic surfaces (that is, surfaces $S$ with $q(S) >0$) produced a considerable amount of results, see for example the survey papers \cite{BaCaPi06, MP12} for a detailed bibliography on the subject.

In particular, surfaces of general type with $\chi(\mathcal{O}_S)=1$, that is, $p_g(S)=q(S)$ were investigated. For these surfaces, \cite[Th\'eor\`eme 6.1]{Deb81} implies $p_g \leq 4$. Surfaces with $p_g=q=4$ and $p_g=q=3$ are nowadays completely classified, see \cite{Be82, CaCiML98, HP02, Pir02}. On the other hand, for the the case $p_g=q=2$, which presents a very rich and subtle geometry, we have so far only a partial understanding of the situation; we refer the reader to \cite{PePol13a, PePol13b, PePol14} for an account on this topic and recent results. 

As the title suggest, in this paper we consider a family $\M$ of minimal surfaces of general type with $p_g=q=2$ and $K^2=7$. The existence of such surfaces was originally established in \cite{CanFr15} with the help of the Computer Algebra System \verb|MAGMA| \cite{BCP97}; the present work provides an alternative, computer-free construction of them, that allows us to describe their Albanese map and their moduli space. 

Our results can be summarized as follows, see Theorem \ref{thm:main}.
\begin{main}
There exists a $3$-dimensional family $\mathcal{M}$ of minimal surfaces of general type with $p_g=q=2$ and $K^2=7$ such that, for all elements $S \in \mathcal{M}$, the canonical class $K_S$ is ample and the Albanese map $\alpha \colon S \to A$ is a generically finite triple cover of a principally polarized abelian surface $(A, \, \Theta)$, simply branched over a curve $D_A$ numerically equivalent to $4 \Theta$ having an ordinary sextuple point and no other singularities.
The family $\mathcal{M}$ provides a generically smooth, irreducible, open and normal subset of the Gieseker moduli space $\mathcal{M}^{\mathrm{can}}_{2, \, 2, \, 7}$ of canonical models of minimal surfaces of general type with $p_g=q=2$ and $K^2=7$. 
\end{main}
In particular, this means that $\mathcal{M}$ provides a dense open set of a generically smooth, irreducible component of $\mathcal{M}^{\mathrm{can}}_{2, \, 2, \, 7}$. Furthermore, denoting by $\mathcal{M}_2$ the coarse moduli space of curves of genus $2$, there exists a quasi-finite, surjective morphism $\varsigma \colon \mathcal{M} \to \mathcal{M}_2$ of degree $40$ (see Proposition \ref{prop:def-S-C2}). 

Let us explain now how the paper is organized. In Section \ref{sec:prod-quot} we explain our construction in detail and we compute the invariants of the resulting surfaces (Proposition \ref{prop:invariants-S}); moreover we study their Albanese map, giving a precise description of its image and of its branch curve (Proposition \ref{prop: branch}). It is worth pointing out that the general surface $S$ contains no irrational pencils (Proposition \ref{prop:no-irr-penc}). 

Section \ref{sec:moduli} is devoted to the study of the first-order deformations of the surfaces in $\M$ and to the description of the corresponding subset in $\M^{\mathrm{can}}_{2, \, 2, \, 7}$. A key point in our analysis is showing that for all elements in $S \in \M$ we have $h^1(S, \, T_S)=3$, see 
Proposition \ref{prop:h1T=3}.

Since the degree of the Albanese map is in this case a topological invariant (Proposition \ref{prop.degree.alb}), it follows that these surfaces lie in a different connected component of the moduli space than the only other known example  with the same invariants, namely the surface with $p_g=q=2$ and $K^2=7$ constructed in \cite{Ri15}, whose Albanese map is a generically finite \emph{double} cover of an abelian surface with polarization of type $(1, \,2)$, see Remark \ref{rem:rito}. Hence the family $\mathcal{M}$ provides a substantially new piece in the fine classification of minimal surfaces of general type with $p_g=q=2$.

\bigskip

\textbf{Acknowledgments.} F. Polizzi was partially supported by
Progetto MIUR di Rilevante Interesse Nazionale \emph{Geometria delle
Variet$\grave{a}$ Algebriche e loro Spazi di Moduli} and by
GNSAGA-INdAM.

He thanks the \verb|MathOverflow| users abx and D. Speyer for their help with the proof of Proposition \ref{prop:allcurves}, and J. Starr for useful suggestions and for pointing out the reference \cite{Mat62}. He is also grateful to T. Gentile for his help with Figure 1.

Roberto Pignatelli was partially supported by the project Futuro in Ricerca 2012 \emph{Moduli Spaces and Applications}.  He is a member of GNSAGA-INdAM. He wishes to thank S\"onke Rollenske for his help with the proof of Lemma \ref{lem:restriction}.

Both authors thank the organizers of the intensive research period Algebraic Varieties and their Moduli at the Centro di Ricerca Matematica Ennio de Giorgi (Pisa) for the invitation and the hospitality in the period May-June 2015, when this project started.

They are also grateful to the referee for many detailed comments and suggestions that considerably improved the presentation of these results.

\bigskip

\textbf{Notation and conventions.} We work over the field
$\mathbb{C}$ of complex numbers. 
By \emph{surface} we mean a projective, non-singular surface $S$,
and for such a surface $K_S$ denotes the canonical
class, $p_g(S)=h^0(S, \, K_S)$ is the \emph{geometric genus},
$q(S)=h^1(S, \, K_S)$ is the \emph{irregularity} and
$\chi(\mathcal{O}_S)=1-q(S)+p_g(S)$ is the \emph{Euler-Poincar\'e
characteristic}.

If $C$ is a smooth curve, we identify $\textrm{Pic}^0(C)$ with the Jacobian variety $J(C)$ by means of the canonical isomorphism provided by the Abel-Jacobi map, see \cite[Theorem 11.1.3]{BL94}. Furthermore, we write $\textrm{Sym}^n(C)$ for the $n$-th symmetric product of $C$. 

Given a finite group $G$ acting on a vector space $V$, we denote by $V^{G}$ the $G$-invariant subspace.


\section{The construction} \label{sec:prod-quot}

Let $V_2$ and $V_3$ be the two hypersurfaces of $\PP^3$ defined by
\begin{equation} \label{eq:quad-cub}
V_2:=\{x_2x_3 + r(x_0, \, x_1)=0\}, \quad V_3:=\{x_2^3+x_3^3+s(x_0, \, x_1)=0\},
\end{equation}
where $r, \, s \in \mathbb{C}[x_0, \, x_1]$ are general homogeneous forms of degree $2$ and $3$, respectively. Then $C_4:=V_2 \cap V_3$ is a smooth, canonical curve of genus $4$.  Denoting by $\xi$ a primitive third root of unity, we see that $C_4$  admits a free action of the cyclic group $\langle \xi \rangle \cong \ZZ/3 \ZZ$, defined by
\begin{equation} \label{eq:action-xi}
\xi \cdot [x_0: x_1:x_2:x_3] = [x_0: x_1: \xi x_2: \xi^2 x_3]
\end{equation}
and the quotient $C_2 := C_4/ \langle \xi \rangle$ is a smooth curve of genus $2$. 

\begin{proposition} \label{prop:allcurves}
All \'etale Galois triple covers of a smooth curve of genus $2$ can be obtained in this way.
\end{proposition}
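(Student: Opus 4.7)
I would approach the proof through the canonical embedding of $C$ into $\PP^3$. Let $\pi \colon C \to C_2$ be an étale Galois triple cover with Galois group generated by $\sigma$, and write $\pi_* \oo_C = \oo_{C_2} \oplus \eta \oplus \eta^{-1}$ for the associated non-trivial $3$-torsion line bundle $\eta \in \operatorname{Pic}^0(C_2)$. By Riemann--Hurwitz $g(C) = 4$, and by étaleness $K_C = \pi^* K_{C_2}$; combining the projection formula with Riemann--Roch on $C_2$ yields the $\sigma$-equivariant decomposition
\[
H^0(C, K_C) = H^0(C_2, K_{C_2}) \oplus H^0(C_2, K_{C_2} \otimes \eta) \oplus H^0(C_2, K_{C_2} \otimes \eta^{-1}),
\]
with dimensions $2,1,1$ on which $\sigma$ acts with characters $1,\xi,\xi^{-1}$ respectively.

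The main obstacle is then to show that $C$ is not hyperelliptic, so that the canonical map is an embedding. If $C$ were hyperelliptic, the hyperelliptic involution $\iota$ would be central in $\operatorname{Aut}(C)$, hence commute with $\sigma$, so $\sigma$ would permute the $2g(C)+2 = 10$ fixed points of $\iota$. Since $\sigma$ is fixed-point free, every orbit would have size $3$, contradicting $3 \nmid 10$.

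Granted non-hyperellipticity, I would pick a basis $x_0, x_1, x_2, x_3$ of $H^0(C, K_C)$ adapted to the above eigendecomposition, so that the canonical embedding $C \hookrightarrow \PP^3$ transports $\sigma$ into the action \eqref{eq:action-xi}. By the Enriques--Petri theorem the ideal of $C$ is generated by a quadric $Q$ (unique up to scalar) and a cubic $F$ (unique modulo $Q \cdot H^0(\oo(1))$). A $\sigma$-equivariant dimension count on the restriction maps $H^0(\PP^3, \oo(d)) \to H^0(C, d K_C)$ for $d = 2, 3$ shows that $Q$ lies in the trivial eigenspace, and that we can also choose $F$ to be $\sigma$-invariant. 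The invariant quadrics are spanned by $x_0^2, x_0 x_1, x_1^2, x_2 x_3$; non-degeneracy of $C$ in $\PP^3$ forces the $x_2 x_3$ coefficient to be non-zero, so after rescaling $Q = x_2 x_3 + r(x_0, x_1)$.

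Finally, I would use $Q x_0$ and $Q x_1$ to eliminate the mixed invariant cubic monomials $x_0 x_2 x_3$ and $x_1 x_2 x_3$ from $F$, reducing it to $F = a x_2^3 + b x_3^3 + s(x_0, x_1)$. A Jacobian check at $[0:0:1:0]$ and $[0:0:0:1]$ forces $a, b \ne 0$ (otherwise $C$ would be singular there), and the substitution $(x_2, x_3) \mapsto (\lambda x_2, \lambda^{-1} x_3)$, which preserves the form of $Q$, combined with an overall rescaling of $F$, normalises $a = b = 1$, yielding the prescribed equations.
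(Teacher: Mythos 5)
Your proposal is correct and follows essentially the same route as the paper's proof: non-hyperellipticity via the free $\mathbb{Z}/3\mathbb{Z}$-action on the ten Weierstrass points, the eigenspace decomposition of $H^0(C,K_C)$ with dimensions $2,1,1$, the equivariant dimension counts showing the quadric is invariant and an invariant cubic generator exists, and the final normalisation of the equations. Your explicit Jacobian check that the coefficients of $x_2^3$ and $x_3^3$ are non-zero is a welcome detail that the paper leaves implicit, but it does not change the argument.
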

\begin{proof}
Let $C_2$ be any smooth curve of genus $2$ and choose 
any \'etale $\mathbb{Z}/3 \mathbb{Z}$-cover $c \colon C_4 \to C_2$. Thus $C_4$ is a smooth curve of genus $4$ and we can choose a fixed-point free automorphism $\varphi \colon C_4 \to C_4$ generating the Galois group of the cover.

The curve $C_4$ cannot be hyperelliptic, otherwise its ten Weierstrass points would be an invariant set by any automorphism, which is impossible because any orbit of $c$ consists of three distinct points. Hence the canonical divisor $K_{C_4}$ is very ample and defines an embedding of $C_4$ in $\mathbb{P}^3= \mathbb{P}H^0(C_4, \, K_{C_4})$, whose image (that we still denote by $C_4$) is the complete intersection of a (uniquely determined) quadric hypersurface $V_2$ and a cubic hypersurface $V_3$. It remains to show that we can choose $V_2$ and $V_3$ as in \eqref{eq:quad-cub}.

Pushing down the canonical line bundle of $C_4$ to $C_2$ gives a decomposition of $H^0(C_4, \, K_{C_4})$  into $\mathbb{Z}/3 \mathbb{Z}$-eigenspaces, namely
\begin{equation} \label{eq:KC4}
H^0(C_4, \, K_{C_4})=
H^0(C_2, \, K_{C_2}) \oplus
H^0(C_2,\, K_{C_2} +\eta) \oplus
H^0(C_2, \, K_{C_2}+2\eta)
\end{equation}
where $\eta$ is a non-trivial, $3$-torsion divisor on $C_2$. The first summand in \eqref{eq:KC4} has dimension $2$, whereas the others have dimension $1$; so we can choose a basis $x_0$,  $x_1$, $x_2$, $x_3$ of $H^0(C_4, \, K_{C_4})$ such that $x_0$, $x_1$ generate $H^0(C_2, \, K_{C_2})$ whereas $x_2$ and $x_3$ generate $H^0(C_2,\, K_{C_2} +\eta)$ and $H^0(C_2, \, K_{C_2}+ 2 \eta)$, respectively. This means that, using homogeneous coordinates $[x_0: x_1: x_2: x_3]$ in $\mathbb{P}^3$, the action of $\mathbb{Z}/3 \mathbb{Z}= \langle \xi \rangle$  can be written as in \eqref{eq:action-xi}. 

We start by looking at the invariant quadrics in the homogeneous ideal of $C_4$. There are four invariant monomials of degree $2$, namely 
\begin{equation} \label{eq:monom-V2}
x_0^2, \; x_0x_1, \; x_1^2, \; x_2x_3,
\end{equation}   
hence the invariant subspace $(\textrm{Sym}^2H^0(C_4, K_{C_4}))^{\langle \xi \rangle}$ of $\textrm{Sym}^2H^0(C_4, K_{C_4})$ has dimension $4$. On the other hand, the subspace of invariant quadrics in the homogeneous ideal of $C_4$ is the kernel of the surjective map
\begin{equation*}
(\textrm{Sym}^2 H^0(C_4, K_{C_4}))^{\langle \xi \rangle} \to H^0(C_4, \, 2K_{C_4})^{\langle \xi \rangle} \cong H^0(C_2, \, 2K_{C_2})\cong \CC^3,
\end{equation*}
hence it has dimension $1$. In other words, the unique quadric 
$V_2$ containing $C_4$ is invariant, hence the polynomial defining $V_2$ is a linear combination of the monomials in \eqref{eq:monom-V2}. The coefficient of $x_2x_3$ cannot vanish, or $V_2$ would be reducible, so $V_2$ is as in \eqref{eq:quad-cub}.

Let us look now at the invariant cubics in the homogeneous ideal of $C_4$. There are eight invariant monomials of degree $3$, namely
\begin{equation*}
x_0^3, \; x_0^2x_1, \; x_0x_1^2, \; x_1^3, \;  x_0x_2x_3, \; x_1x_2x_3, \; x_2^3, \; x_3^3, 
\end{equation*} 
hence the invariant subspace $(\textrm{Sym}^3H^0(C_4, K_{C_4}))^{\langle \xi \rangle}$ of $\textrm{Sym}^3H^0(C_4, K_{C_4})$ has dimension $8$. On the other hand, the subspace of invariant cubics in the homogeneous ideal of $C_4$ is the kernel of the surjective map
\begin{equation*}
(\textrm{Sym}^3 H^0(C_4, K_{C_4}))^{\langle \xi \rangle} \to H^0(C_4, \, 3K_{C_4})^{\langle \xi \rangle} \cong H^0(C_2, \, 3K_{C_2})\cong \CC^5,
\end{equation*}
hence it has dimension $3$. In particular, this implies that the general invariant cubic hypersurface $V_3$ containing $C_4$ is not a multiple of the quadric $V_2$. Adding suitable scalar multiples of $x_0V_2$ and $x_1V_2$ in order to get rid of the monomials $x_0x_2x_3$  and $x_1x_2x_3$, and changing coordinates by multiplying $x_2$ and $x_3$ by suitable constants we obtain an equation of $V_3$ as in $\eqref{eq:quad-cub}$ and we are done.
\end{proof}

Let us consider now the product $C_4 \times C_4 \subset \PP^3 \times \PP^3$, and write $\boldsymbol{x}=[x_0: x_1: x_2: x_3]$ for the homogeneous coordinates in the first factor and $\boldsymbol{y}=[y_0: y_1: y_2: y_3]$ for those in the second factor. Then the action of $\langle \xi \rangle$ on $C_4$ induces an action of $H := \langle \xi_x, \, \xi_y, \, \sigma \rangle$ on $C_4 \times C_4$, where
\begin{equation*}
\xi_x(\boldsymbol{x}, \, \boldsymbol{y}) := (\xi \cdot \boldsymbol{x}, \, \boldsymbol{y}), \quad  \xi_y(\boldsymbol{x}, \, \boldsymbol{y}):= (\boldsymbol{x}, \,  \xi \cdot \boldsymbol{y}), \quad \sigma(\boldsymbol{x}, \, \boldsymbol{y}) := (\boldsymbol{y}, \, \boldsymbol{x}).
\end{equation*}
Clearly $\xi_x$ and $\xi_y$ commute, whereas $\sigma \xi_x =\xi_y\sigma$ and $\sigma \xi_y =\xi_x\sigma$, so $H$ is a semi-direct product of the form 
\begin{equation*}
H = \langle \xi_x, \, \xi_y \rangle \rtimes \langle \sigma \rangle \cong (\mathbb{Z}/ 3 \ZZ)^2 \rtimes \ZZ/2 \ZZ.
\end{equation*}
In particular,  $|H|=18$ and every element $h \in H$ can be written in a unique way as $h=\sigma^k \xi_x ^i \xi_y ^j$, where $k \in \{0, \, 1\}$ and $i, \, j \in \{0, \, 1, \, 2\}$. 

\begin{lemma} \label{lem: stabilizers-product-quotient}
The non-trivial elements of $H$ having fixed points on $C_4 \times C_4$ are precisely the three elements of order $2$
\begin{equation*}
h_i := \sigma \xi_x^i \xi_y ^{3-i}, \quad i=0, \, 1, \, 2. 
\end{equation*}
More precisely, the element $h_i$ fixes pointwise the smooth curve 
\begin{equation*} 
\Gamma_i :=\{(\boldsymbol{x}, \, \xi^i \cdot \boldsymbol{x}) \, | \, \boldsymbol{x} \in C_4 \}, 
\end{equation*} 
that is, the graph of the automorphism of $C_4$ defined by $\boldsymbol{x} \mapsto \xi^i \cdot \boldsymbol{x}$. The three curves $\Gamma_0$,  $\Gamma_1$ and $\Gamma_2$ are isomorphic to $C_4$, pairwise disjoint and their self-intersection equals $-6$. 
\end{lemma}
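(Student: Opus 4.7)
The plan is to separate the elements of $H$ according to whether they lie in the index-two subgroup $\langle \xi_x, \xi_y \rangle$ or in its non-trivial coset, analyze the fixed-point equations in each case, and then read off the geometry of the $\Gamma_i$ from standard intersection theory on $C_4 \times C_4$.

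First I would show that no element of $\langle \xi_x, \xi_y \rangle$ other than the identity has fixed points: such an element acts diagonally as $(\boldsymbol{x}, \boldsymbol{y}) \mapsto (\xi^i \boldsymbol{x}, \xi^j \boldsymbol{y})$, and a fixed point would force one of $\xi^i, \xi^j$ nontrivial to fix a point of $C_4$, contradicting the freeness of the $\langle \xi \rangle$-action established in the construction. Then I would turn to the coset $\sigma \langle \xi_x, \xi_y \rangle$: the element $\sigma \xi_x^i \xi_y^j$ sends $(\boldsymbol{x}, \boldsymbol{y})$ to $(\xi^j \boldsymbol{y}, \xi^i \boldsymbol{x})$, and the fixed-point condition reduces to $\boldsymbol{y} = \xi^i \boldsymbol{x}$ together with $\boldsymbol{x} = \xi^{i+j} \boldsymbol{x}$; freeness of $\langle \xi \rangle$ again forces $i+j \equiv 0 \pmod 3$. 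This picks out exactly the three elements $h_i = \sigma \xi_x^i \xi_y^{3-i}$, and a direct computation confirms $h_i^2 = \mathrm{id}$.

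The same analysis shows that the fixed locus of $h_i$ is precisely $\{(\boldsymbol{x}, \xi^i \boldsymbol{x}) : \boldsymbol{x} \in C_4\} = \Gamma_i$. The first projection $p_1$ restricts to an isomorphism $\Gamma_i \to C_4$, yielding the claim $\Gamma_i \cong C_4$. Pairwise disjointness is immediate: a common point of $\Gamma_i$ and $\Gamma_j$ with $i \neq j$ would produce $\boldsymbol{x} \in C_4$ with $\xi^{i-j} \boldsymbol{x} = \boldsymbol{x}$, once more contradicting freeness.

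The only calculation of substance is the self-intersection, and here I would invoke adjunction on $C_4 \times C_4$. Since both projections restrict to degree-one maps $\Gamma_i \to C_4$ (the second being precomposition with the automorphism $\boldsymbol{x} \mapsto \xi^i \boldsymbol{x}$), one obtains $K_{C_4 \times C_4} \cdot \Gamma_i = 2(2g(C_4)-2) = 12$, and adjunction gives $\Gamma_i^2 = \deg K_{\Gamma_i} - 12 = 6 - 12 = -6$. This is just the classical identity $\Gamma_\varphi^2 = 2 - 2g$ for the graph of an automorphism of a genus-$g$ curve, and I do not expect any real obstacle: the lemma reduces to routine book-keeping with the freeness of $\langle \xi \rangle$ on $C_4$ together with one application of adjunction.
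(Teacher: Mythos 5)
Your proposal is correct and follows essentially the same route as the paper: the same case split between $\langle \xi_x, \xi_y \rangle$ and its coset, the same fixed-point equations forcing $i+j \equiv 0 \pmod 3$ and $\boldsymbol{y} = \xi^i \boldsymbol{x}$, and disjointness from freeness of $\langle \xi \rangle$. The only (cosmetic) difference is in the self-intersection: the paper notes $\Gamma_0$ is the diagonal with $\Gamma_0^2 = 2 - 2g(C_4) = -6$ and that $\Gamma_1, \Gamma_2$ are its translates under the automorphisms $\xi_y, \xi_x$ of $C_4 \times C_4$, whereas you apply adjunction directly to each graph, which amounts to the same classical computation.
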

\begin{proof} Let $h=\sigma^k \xi_x ^i \xi_y ^j$ be an element of $H$.
If $k=0$ then $h (\boldsymbol{x}, \, \boldsymbol{y})= (\xi^i \cdot \boldsymbol{x}, \, \xi^j \cdot \boldsymbol{y})$ so, since the action of $\xi$ on $C_4$ is free, $h$ has fixed points if and only if it is trivial. Thus we can assume $k=1$, in which case we have
\begin{equation*}  
\sigma \xi_x^i \xi_y^j (\boldsymbol{x}, \, \boldsymbol{y})= (\xi^j \cdot \boldsymbol{y}, \, \xi^i \cdot \boldsymbol{x}). 
\end{equation*}
Hence $(\boldsymbol{x}, \, \boldsymbol{y})$ is a fixed point for $h$ 
if and only if $i+j \equiv 0\, (\textrm{mod }3)$ and $\boldsymbol{y}= \xi^i \cdot \boldsymbol{x}$, that is $(\boldsymbol{x}, \, \boldsymbol{y}) \in \Gamma_i$. 
 
A straightforward computation using the relations $\sigma^2=1$ and $\xi_x \sigma = \sigma \xi_y$ shows that the order of $h_i$ is $2$.

The curve $\Gamma_0$ is the diagonal of $C_4 \times C_4$, hence it is isomorphic to $C_4$ and satisfies $(\Gamma_0)^2 = 2-2g(C_4)=-6$. The same is true for the curves $\Gamma_1$ and $\Gamma_2$, because they are the translate of $\Gamma_0$ by the action of $\xi_y$ and $\xi_x$, respectively. Finally, $\Gamma_i$ and $\Gamma_j$ are disjoint for $i \neq j$, because  $\xi$ acts freely on $C_4$.
\end{proof}

Lemma \ref{lem: stabilizers-product-quotient} implies that the quotient map $C_4\times C_4 \to (C_4 \times C_4)/H$ is ramified exactly over the three curves $\Gamma_i$, with ramification index $2$ on each of them. We factor such a map through the quotient by the normal abelian subgroup $\langle \xi_x, \,  \xi_y \rangle \cong (\ZZ/3\ZZ)^2$. This subgroup acts separately on the two factors, whereas $\sigma$ exchanges them, so we get 
\begin{equation*}
(C_4 \times C_4)/ \langle \xi_x, \,  \xi_y \rangle \cong C_2 \times C_2, \quad  (C_4 \times C_4)/H \cong \textrm{Sym}^2 (C_2).   
\end{equation*}
Thus the surface $B =(C_4 \times C_4)/H$ contains a unique rational curve, namely the $(-1)$-curve $E$ corresponding to the unique $g_2^1$ of $C_2$. Denoting by $\pi \colon B \to  A$ the blow-down of $E$, we see that $A$ is an abelian surface isomorphic to the Jacobian variety $J(C_2)$.

\begin{remark} \label{rem:all-jacobians}
Because of Proposition \ref{prop:allcurves}, all Jacobians of smooth curves  of genus $2$ can be obtained in this way.
\end{remark}
\bigskip
Let us denote now by $\xi_{xy}$ the element $\xi_x \xi_y$ and set $G:=\langle \xi_{xy}, \, \sigma \rangle$; then $G$ is a non-normal, abelian subgroup of $H$, isomorphic to $\ZZ / 2 \ZZ \times \ZZ / 3 \ZZ$. Setting
\begin{equation*}
T := (C_4 \times C_4)/ \langle \xi_{xy} \rangle, \quad S := (C_4 \times C_4) /G, 
\end{equation*}
and writing $t \colon C_4 \times C_4 \to T$ and $f \colon C_4 \times C_4 \to S$ for the corresponding projection maps, we have the following commutative diagram:
\begin{equation} \label{diag:quotients}    
\begin{split}
   \xymatrix{ 
        C_4 \times C_4\ar@/{}_{1pc}/[ddr]_{f} \ar@/{}^{1pc}/[drr] \ar[dr]^{t}\\ 
        & T=(C_4 \times C_4)/ \langle \xi_{xy} \rangle \ar[d]^{u} \ar[r]^-{\gamma} & (C_4 \times C_4)/ \langle \xi_x, \, \xi_y \rangle \cong C_2 \times C_2 \ar[d]^{v} \\  & S=(C_4 \times C_4)/G \ar[r]^-{\beta} \ar[dr]_-{\alpha} & (C_4 \times C_4)/H =B  \cong \textrm{Sym}^2 (C_2)  \ar[d]^{\pi} \\ & & A.}
     \end{split}
\end{equation}
The morphism $u \colon T \to S$ is a double cover, induced by the involution $\sigma$ exchanging the two coordinates in $C_4 \times C_4$. 

We first compute the invariants of $T$.
\begin{lemma} \label{lem:invariants-T}
The surface $T$ is minimal of general type with
\begin{equation*}
p_g(T)=6, \quad q(T)=4, \quad K_T^2=24.
\end{equation*}
\end{lemma}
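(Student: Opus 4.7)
The plan is to exploit that the quotient map $t$ is étale, so that numerical invariants transfer from $C_4 \times C_4$ by division, and Hodge-theoretic ones can be read off the $\xi_{xy}$-eigenspace decomposition given by Proposition \ref{prop:allcurves}.

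\textbf{Step 1: $t$ is étale.} The element $\xi_{xy} = \xi_x \xi_y$ has order $3$, and a fixed point $(\boldsymbol{x}, \boldsymbol{y})$ would force $\xi \cdot \boldsymbol{x} = \boldsymbol{x}$ and $\xi \cdot \boldsymbol{y} = \boldsymbol{y}$; since $\xi$ acts freely on $C_4$, this is impossible. (This is the $k=0$ case already handled in Lemma \ref{lem: stabilizers-product-quotient}.) Hence $\langle \xi_{xy} \rangle$ acts freely, so $T$ is smooth and $t$ is étale of degree~$3$.

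\textbf{Step 2: $K_T^2 = 24$.} Since $t$ is étale, $K_{C_4 \times C_4} = t^* K_T$, so $K_T^2 = \frac{1}{3} K_{C_4 \times C_4}^2$. Writing $K_{C_4 \times C_4} = 6 F_1 + 6 F_2$ with $F_1, F_2$ fibers of the two projections, a standard computation gives $K_{C_4 \times C_4}^2 = 72$, whence $K_T^2 = 24$.

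\textbf{Step 3: $q(T) = 4$ and $p_g(T) = 6$.} I would use $H^i(T, \Omega_T^j) = H^i(C_4 \times C_4, \Omega^j)^{\langle \xi_{xy} \rangle}$. The eigenspace decomposition in \eqref{eq:KC4} shows that, as a representation of $\langle \xi \rangle$,
\begin{equation*}
H^0(C_4, K_{C_4}) = V_1 \oplus V_\xi \oplus V_{\xi^2}, \quad \dim V_1 = 2, \; \dim V_\xi = \dim V_{\xi^2} = 1.
\end{equation*}
Since $\xi_{xy}$ acts as $\xi$ on both $p_1^* H^0(C_4, K_{C_4})$ and $p_2^* H^0(C_4, K_{C_4})$, the invariant part of $H^0(C_4 \times C_4, \Omega^1) = p_1^*H^0(C_4, K) \oplus p_2^*H^0(C_4, K)$ has dimension $2+2=4$, giving $q(T)=4$. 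By Künneth, $H^0(C_4 \times C_4, K_{C_4 \times C_4}) \cong H^0(C_4, K_{C_4})^{\otimes 2}$, and the $1$-eigenspace of the diagonal $\xi$-action equals $V_1 \otimes V_1 \oplus V_\xi \otimes V_{\xi^2} \oplus V_{\xi^2} \otimes V_\xi$, of dimension $4+1+1 = 6$. Hence $p_g(T) = 6$, consistently with $\chi(\mathcal{O}_T) = \chi(\mathcal{O}_{C_4 \times C_4})/3 = 3$.

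\textbf{Step 4: Minimality and general type.} The surface $C_4 \times C_4$ is minimal of general type (product of curves of genus $\geq 2$), so $K_{C_4 \times C_4}$ is nef and big. For any irreducible curve $C \subset T$ one has $3 K_T \cdot C = t^*K_T \cdot t^{-1}(C) = K_{C_4 \times C_4} \cdot t^{-1}(C) \geq 0$, so $K_T$ is nef; a $(-1)$-curve would violate this, so $T$ is minimal. Finally $K_T^2 = 24 > 0$ together with $K_T$ nef gives $K_T$ big, and $T$ is of general type.

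No step is really an obstacle; the only point requiring care is the bookkeeping of the characters in Step 3, which is controlled precisely by the decomposition \eqref{eq:KC4} established in the proof of Proposition~\ref{prop:allcurves}.
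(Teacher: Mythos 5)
Your proof is correct, and it diverges from the paper's in a way worth noting. For $K_T^2$, $\chi(\mathcal{O}_T)$ and minimality the two arguments are essentially the same (free diagonal action, divide by $3$, \'etale quotient of a minimal surface of general type — your explicit nef-by-projection-formula argument just spells out what the paper asserts by citation). The real difference is in $q(T)$ and $p_g(T)$: the paper invokes the quasi-bundle / isogenous-to-a-product structure of $T$ (citing \cite{Pol09}) to get $q(T)=g(C_2)+g(C_2)=4$ and then deduces $p_g(T)=6$ from $\chi(\mathcal{O}_T)=3$, whereas you compute both invariants directly as $\langle \xi_{xy}\rangle$-invariants, using the eigenspace decomposition \eqref{eq:KC4} together with $H^0(\Omega^1_{C_4\times C_4})=p_1^*H^0(K_{C_4})\oplus p_2^*H^0(K_{C_4})$ and the K\"unneth identification $H^0(K_{C_4\times C_4})\cong H^0(K_{C_4})\otimes H^0(K_{C_4})$; your character bookkeeping ($2+2=4$ for $q$, $4+1+1=6$ for $p_g$) is accurate, and $1-4+6=3$ confirms consistency with $\chi$. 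What your route buys is self-containedness — no appeal to the general theory of quasi-bundles, and $p_g$ obtained directly rather than as a by-product of Noether/$\chi$ — at the cost of slightly more representation-theoretic bookkeeping; the paper's route is shorter and also hands over $c_2(T)=12$ via Noether's formula, which is used later in the computation of $c_2(S)$, so keep that consequence in view if you adopt your version.
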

\begin{proof}
By standard calculations we have
\begin{equation*} 
p_g(C_4 \times C_4)= 16, \quad q(C_4 \times C_4)=8, \quad K_{C_4 \times C_4}^2= 72. 
\end{equation*}
The group $\langle \xi_{xy}{} \rangle \cong \ZZ / 3\ZZ$ acts diagonally and freely on $C_4 \times C_4$, hence $T$ is a so-called \emph{quasi-bundle}, see for instance \cite[Section 3]{Pol09}. Therefore we obtain
\begin{equation*} 
K_T^2 = \frac{1}{3} K_{C_4 \times C_4}^2=24, \quad \chi(\mathcal{O}_T)=\frac{1}{3}\chi(\mathcal{O}_{C_4 \times C_4})= 3, \quad q(T)=g(C_2) + g(C_2)=4, 
\end{equation*}
so $p_g(T)=6$. Note that by Noether's formula this implies $c_2(T)=12$. Finally, $T$ is a minimal surface of general type because it a finite, \'{e}tale quotient of the minimal surface of general type $C_4 \times C_4$.  
\end{proof}

The three curves $\Gamma_i \subset C_4 \times C_4$ are $\xi_{xy}$-invariant, hence their images $\Sigma_i :=t(\Gamma_i) \subset T$ are three curves isomorphic to $C_2$ and such that $(\Sigma_i)^2=\frac{1}{3}(\Gamma_i)^2=-2$. Moreover, the curve $\Gamma_0$ is also $\sigma$-invariant, whereas $\Gamma_1$ and $\Gamma_2$ are switched by the action of $\sigma$. Then $D_S:=u(\Sigma_0)$ and $R:= u(\Sigma_1)= u(\Sigma_2)$ are two disjoint curves in $S$, both isomorphic to $C_2$, such that $(D_S)^2=-4$ and $R^2=-2$. Note that $D_S$ is the branch locus of the double cover $u \colon T \to S$. 

We can now compute the invariants of $S$.

\begin{proposition} \label{prop:invariants-S}
The surface $S$ is a minimal surface of general type with
\begin{equation*}
p_g(S)=2, \quad q(S)=2, \quad K_S^2=7.
\end{equation*}
The morphism $\beta \colon S \to B$ is a non-Galois triple cover, simply ramified over $R$ and simply branched over the diagonal $D_B \subset  B$. Finally, $S$ contains no rational curves $($in particular, $K_S$ is ample$)$ and contains a smooth elliptic curve, namely 
$Z:=\beta^* E$ $($which satisfies $Z^2=-3)$.
\end{proposition}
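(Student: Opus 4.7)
My plan is to establish the four assertions in the order given, using the diagram~\eqref{diag:quotients}, Lemma~\ref{lem: stabilizers-product-quotient} on fixed loci, and Lemma~\ref{lem:invariants-T} on the invariants of~$T$. Among the non-identity elements of $G\cong\mathbb{Z}/2\times\mathbb{Z}/3$, only $\sigma=h_0$ has fixed points on $C_4\times C_4$ (since $h_1,h_2\notin G$), so $f$ is ramified only along $\Gamma_0$ with index~$2$. Squaring $f^*K_S=K_{C_4\times C_4}-\Gamma_0$, and using $K_{C_4\times C_4}^2=72$, $\Gamma_0^2=-6$, and $K_{C_4\times C_4}\cdot\Gamma_0=12$ (by adjunction on $\Gamma_0\cong C_4$), yields $6K_S^2=42$, hence $K_S^2=7$. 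For $q(S)$ I compute the $G$-invariants in $H^0(\Omega^1_{C_4\times C_4})\cong H^0(K_{C_4})^{\oplus 2}$: by~\eqref{eq:KC4} the $\xi_{xy}$-invariants are $H^0(K_{C_2})^{\oplus 2}$, and the $\sigma$-invariants therein form the diagonal copy, so $q(S)=2$. Applying the standard double-cover formula to $u\colon T\to S$ with $L=\tfrac{1}{2}D_S$ (so $L^2=-1$ and $L\cdot K_S=3$ by adjunction on $D_S$), together with $\chi(\mathcal{O}_T)=3$ from Lemma~\ref{lem:invariants-T}, gives $\chi(\mathcal{O}_S)=1$, and so $p_g(S)=2$.

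For the structure of $\beta$, the degree is $|H|/|G|=3$, and a direct computation (for instance $\xi_x\sigma\xi_x^{-1}=\sigma\xi_x^2\xi_y\notin G$) shows that $G$ is not normal in $H$, so $\beta$ is non-Galois. To locate the ramification I analyze each $\Gamma_i$: for $\Gamma_0$ the stabilizer in $H$ is all of $G$ and the inertia $\langle\sigma\rangle$ sits inside $G$, so the inertia is absorbed by $f$; hence $D_S\to D_B$ is an isomorphism and $\beta$ is étale along $D_S$. For $\Gamma_1$ (symmetrically $\Gamma_2$), $\mathrm{Stab}_G(\Gamma_1)=\langle\xi_{xy}\rangle$ acts freely and $\sigma$ swaps $\Gamma_1\leftrightarrow\Gamma_2$; since $f$ is étale along $\Gamma_1$ but the full $H$-inertia has order $2$, all the ramification is concentrated in $\beta$, which is therefore simply ramified along $R$. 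Thus $R\to D_B$ is also an isomorphism and $\beta^*D_B=D_S+2R$, matching the local model $(x,y)\mapsto(x^2,y)$ near $R$ and an étale local model near $D_S$.

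For $Z=\beta^*E$, the projection formula gives $Z^2=\deg\beta\cdot E^2=-3$. Using $K_S=\beta^*K_B+R$, $K_B=E$ (as $\pi$ is the blow-up of a single point on an abelian surface), and $\beta_*R=D_B$, I obtain $K_S\cdot Z=3(K_B\cdot E)+D_B\cdot E=-3+6=3$, where the $6$ counts the Weierstrass points of $C_2$ appearing in $E\cap D_B$. Adjunction then yields $p_a(Z)=1$. Smoothness is a local check at these six transverse intersection points: near the $D_S$-preimage $\beta$ is étale; near the $R$-preimage the local model $(x,y)\mapsto(x^2,y)$ pulls back the equation $\{v=0\}$ of $E$ to the reduced smooth divisor $\{y=0\}$. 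For irreducibility I would lift to $C_4\times C_4$: because $\iota^*\eta=\eta^{-1}$ generates the same cyclic subgroup as $\eta$, the preimage of the graph of $\iota$ under the étale $(\mathbb{Z}/3)^2$-cover $C_4\times C_4\to C_2\times C_2$ decomposes as the three disjoint graphs of the three lifts $\tilde\iota$ of $\iota$ to $C_4$. The relation $\tilde\iota\,\xi\,\tilde\iota=\xi^{-1}$ on $C_4$ then implies that $\xi_{xy}$ cyclically permutes these three components while $\sigma$ preserves each, so the $G$-quotient is a single irreducible smooth curve, elliptic by the genus computation.

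Finally, any rational curve $C\subset S$ would be contracted by $\alpha=\pi\circ\beta\colon S\to A$, since abelian varieties contain no rational curves; but $\beta$ is finite and $\pi$ contracts only $E$, so the only positive-dimensional fiber of $\alpha$ is $Z$, which is elliptic. Hence $S$ contains no rational curves at all, in particular no $(-1)$- or $(-2)$-curves, and together with $K_S^2=7>0$ Nakai--Moishezon gives $K_S$ ample. The step I expect to be the main obstacle is the irreducibility of $Z$: one must carefully track the three lifts of the hyperelliptic involution and verify that the relation $\tilde\iota\,\xi\,\tilde\iota=\xi^{-1}$ forces $\xi_{xy}$ to cycle the three components rather than stabilize them.
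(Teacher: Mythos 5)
Your proposal is correct, and while it reaches the same conclusions it takes a partly different route from the paper's proof, so let me compare. For $K_S^2$ you apply Riemann--Hurwitz directly to the degree-$6$ quotient $f\colon C_4\times C_4\to S$ (ramified only along $\Gamma_0$), whereas the paper gets $K_S^2=7$ from $K_T^2=24$ via the double cover $u\colon T\to S$; both are clean, yours bypasses $T$ for this step but still uses Lemma \ref{lem:invariants-T} later, through $\chi(\mathcal{O}_T)=3$ and the double-cover formula, to get $\chi(\mathcal{O}_S)=1$ (the paper instead computes $c_2(S)=5$ by an Euler-number count and applies Noether). The most substantive divergence is $q(S)$: you compute it directly as $\dim\bigl(H^0(\Omega^1_{C_4\times C_4})\bigr)^G=2$ using the eigenspace decomposition \eqref{eq:KC4}, which avoids the paper's appeal to the classification of surfaces with $p_g=q\geq 3$ (compare Remark \ref{rem:Cancian-Frapporti}, where the authors note this appeal can be avoided by other means); this is a genuine simplification of the logical dependencies. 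For $Z$ you get $Z^2=-3$ by the projection formula and $p_a(Z)=1$ by adjunction, rather than the paper's derivation from $K_S^2=(Z+R)^2$, and -- notably -- you supply an actual argument for the smoothness and especially the irreducibility of $\beta^*E$ (lifting the graph of the hyperelliptic involution to $C_4\times C_4$, using $\iota^*\eta=\eta^{-1}$ so that $\xi_{xy}$ permutes the three graphs of the lifts transitively), a point the paper asserts from transversality alone; your worry that this is the delicate step is well placed, since a priori the degree-$3$ cover of $E\cong\mathbb{P}^1$ simply branched at six points could split as a rational section plus a genus-$2$ double cover, and your transitivity argument is exactly what rules this out (note that transitivity of $\langle\xi_{xy}\rangle$ on the three components already suffices; whether $\sigma$ preserves each component is not needed, though it does, since any lift $\tilde\iota$ is automatically an involution). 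Your stabilizer analysis of the ramification of $\beta$ and the non-normality computation match the paper's use of Lemma \ref{lem: stabilizers-product-quotient}, and your no-rational-curves/ampleness argument is the paper's, phrased through $A$ instead of through the uniqueness of the rational curve $E\subset B$; in the Nakai--Moishezon step you tacitly use that an irreducible curve $C$ with $K_S\cdot C\leq 0$ on a surface with $K_S^2>0$ and no rational curves cannot exist, which is at the same level of brevity as the paper's own one-line deduction of minimality, general type and ampleness.
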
  
\begin{proof}
We start by proving the last claim. The two smooth curves $D_B$ and $E$ intersect transversally at the six points corresponding to the six Weierstrass points of $C_2$, so $Z:=\beta^* E$ is a smooth, irreducible curve of genus $1$ contained in $S$, such that
\begin{equation} \label{eq:ZR}
ZR = (\beta^*E).R = E.(\beta_*R) = E D_B =6. 
\end{equation}

On the other hand, $S$ does not contain any rational curve. Otherwise, such a curve would map would map onto $E$ via $\beta \colon S \to B$, impossible because we have seen that $\beta^*E$ is smooth of genus $1$.

Since the double cover $u \colon T \to S$ is branched over the curve $D_S$, it follows that $D_S$ is $2$-divisible in $\textrm{Pic}(S)$ and moreover  
\begin{equation*}
24=K_T^2=2\bigg(K_S+\frac{1}{2}D_S \bigg)^2.
\end{equation*}
Using $(D_S)^2=-4$ and $K_SD_S=6$, we find $K_S^2=7$. Since $S$ does not contain any rational curve and $K_S^2>0$, we deduce that $S$ is a minimal surface of general type with ample canonical class.

Now, as $K_B=\pi^*K_A+E=E$, the Riemann-Hurwitz formula yields  
\begin{equation} \label{eq:K_S}
K_S = \beta^* K_B + R = Z + R,
\end{equation}
and this allows us to compute $Z^2$. In fact, using \eqref{eq:ZR} and \eqref{eq:K_S}, we can write
\begin{equation*}
7 = K_S^2 = Z^2 + 2ZR + R^2 = Z^2 +10, 
\end{equation*}
that is $Z^2=-3$.

Next, denoting by $\chi_{\textrm{top}}$ the topological Euler number, we have 
\begin{equation*}
\begin{split}
\chi_{\textrm{top}} (S-D_S-R)& = \frac{1}{2} \chi_{\textrm{top}} (T - \Sigma_0 - \Sigma_1 - \Sigma_2) \\
& = \frac{1}{2} (c_2(T) - \chi_{\textrm{top}}(\Sigma_0) - \chi_{\textrm{top}}(\Sigma_1) - \chi_{\textrm{top}}(\Sigma_2))=\frac{1}{2}(12-3(-2)) =9,
\end{split}
\end{equation*}
so
\begin{equation*}
c_2(S)=\chi_{\textrm{top}} (S)=\chi_{\textrm{top}} (S-D_S-R) + \chi_{\textrm{top}}(D_S) + \chi_{\textrm{top}} (R)= 9 - 2 - 2 =5.
\end{equation*}
Therefore Noether's formula yields $\chi(\mathcal{O}_S)=1$, that is $p_g(S)=q(S)$. 

The existence of the surjective morphism $\alpha \colon S \to A$ implies $q \geq 2$, and since minimal surfaces of general type with $p_g=q\geq 3$ have either $K^2=6$ or $K^2=8$ (see for instance \cite{BaCaPi06}), we deduce $p_g(S)=q(S)=2$.

The morphism $\beta \colon S \to B$ is a non-Galois triple cover, because $G$ is a non-normal subgroup of index $3$ in $H$. Since $t \colon C_4 \times C_4 \to  T$ is \'{e}tale and $u \colon  T  \to S$ is branched over $D_S$, by Lemma  \ref{lem: stabilizers-product-quotient} it follows that $\beta \colon S \to B$ is simply ramified over $R$, and hence simply branched over $\beta(R) = D_B$. 
\end{proof}

\begin{remark} \label{rem:Cancian-Frapporti}
The existence of surfaces $S$ was first established in \cite{CanFr15}, using a computer-aided construction based on \verb|Magma| computations. 
The present paper provides the first computer-free description of them.  Actually, $S$ is a {\it semi-isogenous mixed surface}, namely a quotient of type $(C \times C)/G,$ where $C$ is a smooth curve and $G$ is a finite subgroup of $\mathrm{Aut}(C \times C)$, such that the subgroup $G^0$  of the automorphisms preserving both factors has index $2$ and acts freely. In fact, with our previous notation 
\begin{equation*}
C=C_4, \quad G= \langle \xi_{xy}, \, \sigma \rangle, \quad G^0= \langle \xi_{xy} \rangle. 
\end{equation*}
The paper \cite{CanFr15} provides a detailed study of semi-isogenous mixed surfaces, showing, among other things, that they are smooth and how to compute their invariants. For instance, \cite[Proposition 2.6]{CanFr15} allows us to prove the equality $q(S)=2$ without exploiting the classification of surfaces with $p_g=q \geq 3$. 
\end{remark}
\bigskip 
Let us now identify the blow-up morphism $\pi \colon B \to A$ with the Abel-Jacobi map 
\begin{equation*}
\textrm{Sym}^2(C_2) \to J(C_2). 
\end{equation*}
If $\Theta$  is the class of a theta divisor in $\textrm{NS}(A)$, let us define the class $\Theta_B:=\pi^* \Theta$ in $\textrm{NS}(B)$. Moreover, let us write $x$ for the class in $\textrm{NS}(B)$ given by the image of the map
\begin{equation*}
C_2 \to \textrm{Sym}^2(C_2), \quad p \mapsto p_0+p
\end{equation*}
where $p_0 \in C_2$ is fixed (such a class does not depend on $p_0$). Then we can prove the following
\begin{lemma} \label{lem:delta}
The equality  
$\pi_* D_B = 4 \Theta$ holds in $\mathrm{NS}(A)$.
\end{lemma}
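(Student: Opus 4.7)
My plan is to lift the class $[D_B]$ to $C_2\times C_2$, where composition with $\pi$ becomes the familiar sum map to $J(C_2)$, and then to apply two standard facts: Poincar\'e's formula for the Abel-Jacobi image of a curve of genus $2$, and the action of the multiplication-by-$n$ isogeny on $\mathrm{NS}$ of an abelian surface.

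First I would let $\mu\colon C_2\times C_2\to \mathrm{Sym}^2(C_2)=B$ be the quotient by the involution exchanging the two factors, so that $D_B=\mu(\Delta)$, where $\Delta\subset C_2\times C_2$ is the diagonal. Although $\mu$ is ramified along $\Delta$, the restriction $\mu|_\Delta\colon \Delta\to D_B$ is an isomorphism of smooth curves (both are smooth and the map is bijective), hence $(\mu|_\Delta)_*[\Delta]=[D_B]$ and therefore
\[
\pi_*[D_B]=(\pi\circ\mu|_\Delta)_*[\Delta].
\]

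Next I would fix a base point $p_0\in C_2$ and identify $\pi\colon B\to A=J(C_2)$ with the Abel-Jacobi map $\{p,q\}\mapsto (p-p_0)+(q-p_0)$. Under this identification, $\pi\circ\mu\colon C_2\times C_2\to A$ is the sum morphism induced by the Abel-Jacobi embedding $\iota\colon C_2\hookrightarrow A$, $p\mapsto p-p_0$. Restricting to $\Delta$ (and identifying it with $C_2$) I obtain
\[
g:=(\pi\circ\mu)|_\Delta\colon C_2\to A,\qquad g(p)=2\iota(p)=[2]\bigl(\iota(p)\bigr),
\]
where $[2]\colon A\to A$ denotes multiplication by $2$.

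Finally I would apply two classical facts. Poincar\'e's formula yields $\iota_*[C_2]\equiv\Theta$ in $\mathrm{NS}(A)$, since $\iota(C_2)$ is a translate of a theta divisor; and on an abelian surface the identities $[n]^*D\equiv n^2 D$ together with $\deg[n]=n^4$ force $[n]_*D\equiv n^2 D$ on $\mathrm{NS}$. Combining these,
\[
\pi_*[D_B]=g_*[C_2]=[2]_*\iota_*[C_2]\equiv [2]_*\Theta\equiv 4\Theta,
\]
which is the claim. No step is really hard; the only thing worth tracking is the translation by $-2p_0$ implicit in the Abel-Jacobi identifications, but it acts trivially on $\mathrm{NS}(A)$, so every step is routine.
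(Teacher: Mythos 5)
Your argument is correct, and it takes a genuinely different route from the paper. The paper works entirely in $\mathrm{NS}(B)$: it quotes Mattuck's relations for the symmetric square, $2E+D_B=4x$ and $\Theta_B=E+x$, deduces $D_B=4\Theta_B-6E$, and then applies $\pi_*$ (killing $E$). You instead lift to $C_2\times C_2$, observe that $\pi\circ\mu$ restricted to the diagonal is $[2]\circ\iota$ for the Abel--Jacobi embedding $\iota$, and conclude via Riemann/Poincar\'e ($\iota_*[C_2]\equiv\Theta$, since $W_1=W_{g-1}$ is a translate of a theta divisor for $g=2$) together with $[n]_*\equiv n^2$ on $\mathrm{NS}$ of an abelian surface (which follows, as you say, from $[n]^*\equiv n^2$, $[n]_*[n]^*=n^4$, and torsion-freeness of $\mathrm{NS}(A)$ to cancel the factor $n^2$). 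All the small points you flag are fine: $\mu|_\Delta$ has degree $1$ onto $D_B$, which is all the cycle pushforward needs, functoriality gives $\pi_*[D_B]=([2]\circ\iota)_*[C_2]$, and translations act trivially on $\mathrm{NS}(A)$, so the base-point ambiguity is harmless; note also that it is irrelevant whether $C_2\to D_A$ is birational, since the lemma is about the pushforward cycle, not about $[D_A]$. What each approach buys: yours is self-contained and avoids citing Mattuck, at the cost of only computing the class downstairs in $\mathrm{NS}(A)$; the paper's computation additionally yields the full class $D_B=4\Theta_B-6E$ in $\mathrm{NS}(B)$ (a finer piece of information, though not needed elsewhere in the paper).
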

\begin{proof}
This is a consequence of general results on $g$-fold symmetric products of curves of genus $g$. For instance, \cite[equations (1) and (5)]{Mat62} give in our case the relations 
\begin{equation*}
2 E + D_B = 4x, \quad \Theta_B = E+x
\end{equation*} 
in $\textrm{NS}(B)$, and these in turn imply $D_B=4 \Theta_B - 6E$. So the result follows by applying the push-forward map $\pi_* \colon \textrm{NS}(B) \to \textrm{NS}(A)$.
\end{proof}

The next step consists in describing the Albanese morphism of $S$.
\begin{proposition}\label{prop: branch}
The abelian surface $A$ is isomorphic to $\mathrm{Alb}(S)$ and, up to automorphisms of $A$, the generically finite triple cover $\alpha=\pi \circ \beta \colon S \to A$ coincides with the Albanese morphism of $S$. Furthermore, the only curve contracted by $\alpha$ is $Z$. Finally, $\alpha$ is branched over a divisor $D_A$ numerically equivalent to $4 \Theta$, having an ordinary sextuple point and no other singularities. 
\end{proposition}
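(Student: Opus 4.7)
The plan is to dispose of the four claims in sequence, using the universal property of the Albanese, the finiteness of $\beta$, and the explicit geometry of $\pi$ together with what has already been established in Proposition \ref{prop:invariants-S} and Lemma \ref{lem:delta}.

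For the Albanese claim, I would apply the universal property to the surjective morphism $\alpha=\pi\circ\beta \colon S\to A$ onto the abelian surface $A$: it factors as $\alpha = g\circ \mathrm{alb}_S$, with $g\colon \mathrm{Alb}(S)\to A$ a morphism of varieties which, after a translation, is a group homomorphism. Since $\alpha$ is surjective and $q(S)=\dim A=2$ by Proposition \ref{prop:invariants-S}, $g$ is an isogeny, and from $3=\deg\alpha = \deg(\mathrm{alb}_S)\cdot \deg(g)$ the only possibilities are $(\deg\mathrm{alb}_S,\deg g)=(3,1)$ or $(1,3)$. The case $\deg\mathrm{alb}_S=1$ is excluded because it would force $S$ to be birational to the abelian surface $\mathrm{Alb}(S)$, contradicting the fact that $S$ is of general type. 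Hence $\deg g=1$, $g$ is an isomorphism of abelian varieties, and $\alpha$ coincides with $\mathrm{alb}_S$ up to automorphisms of $A$.

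For the contracted curves, the key observation is that $\beta$ is finite (being a triple cover) so it contracts nothing, while $\pi$ contracts exactly $E$. Hence any curve contracted by $\alpha$ is an irreducible component of $\beta^{-1}(E)=\beta^* E = Z$; since $Z$ is a single irreducible smooth elliptic curve (Proposition \ref{prop:invariants-S}), it is the unique contracted curve.

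For the branch locus, since $\pi$ is a birational morphism between smooth surfaces, it is unramified in codimension one, so the branch divisor of $\alpha$ equals $D_A := \pi(D_B)$. Its numerical class in $\mathrm{NS}(A)$ is $\pi_*D_B$, which equals $4\Theta$ by Lemma \ref{lem:delta}. For the singularities, $D_B$ is smooth (being the image of the diagonal of $C_2\times C_2$) and $\pi$ restricts to an isomorphism on $B\setminus E$, so $D_A$ is smooth outside $p:=\pi(E)$. The local picture at $p$ is governed by $D_B\cap E$: identifying $B$ with $\mathrm{Sym}^2(C_2)$, this intersection consists of the six pairs $\{w,w\}$ with $w$ a Weierstrass point of $C_2$, and the equality $D_B\cdot E=6$ used in Proposition \ref{prop:invariants-S} forces these intersections to be transverse. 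Contracting $E\cong\mathbb{P}^1$ to $p$ sends the six transverse intersections to six distinct tangent directions at $p$, producing an ordinary sextuple point and no further singularities.

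The delicate step, in my view, is the Albanese identification: one must carefully distinguish $\deg\mathrm{alb}_S$ from $\deg g$ and invoke the general-type hypothesis to exclude the birational alternative. The remaining claims are essentially geometric bookkeeping, relying on the finiteness of $\beta$ and the fact that $\pi$ contracts a single transverse $6$-fold intersection.
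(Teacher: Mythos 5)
Your proposal is correct and follows essentially the same route as the paper's proof: the universal property of the Albanese variety (with your degree bookkeeping $3=\deg(\mathrm{alb}_S)\cdot\deg(g)$ and the general-type exclusion merely making explicit what the paper leaves terse), finiteness of $\beta$ plus the fact that $\pi$ contracts only $E$ to identify $Z$ as the unique contracted curve, and Lemma \ref{lem:delta} together with the transverse six-point intersection $D_B\cap E$ to get $D_A\equiv 4\Theta$ with an ordinary sextuple point. No gaps to report.
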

\begin{proof}
By the universal property of the Albanese variety (\cite[Chapter V]{Be96}), the morphism $\alpha \colon S \to A$ must factor through the Albanese morphism of $S$; but $\alpha$ is surjective and generically of degree $3$, so it must actually coincide with the Albanese morphism of $S$ up to automorphisms of $A$. Since $\beta$ is a finite morphism, $\alpha$ only contracts the preimage of $E$ in $S$, which is $Z$. The branch locus $D_A$ of $\alpha$ is equal to the image of the diagonal $D_B$ via $\pi \colon B \to A$; since $D_B$ is smooth and intersects $E$ transversally at six points, it follows that $D_A$ has an ordinary sextuple point and no other singularities. Finally, the fact that $D_A$ is numerically equivalent to $4 \Theta$ follows from Lemma \ref{lem:delta}.
\end{proof}
The situation is summarized in Figure \ref{fig:cover} below.
\begin{center}
\begin{tikzpicture}[xscale=-1,yscale=-0.25,inner sep=0.7mm,place/.style={circle,draw=black!100,fill=black!100,thick}] \label{fig:cover} 
\draw (-0.7,-0.5) rectangle (0.6,5);

\draw[red,rotate=92,x=6.28ex,y=1ex] (0.9,-0.85) cos (1,0) sin (1.25,1) cos (1.5,0) sin (1.75,-1) cos (2,0) sin (2.25,1) cos (2.5,0) sin (2.75,-1) cos (3,0) sin (3.25,1) cos (3.5,0) sin (3.6,-0.85);
\draw[rotate=92] (0.5,-0.025) .. controls (1.75,0.035) and (2.75,0.035) .. (4,-0.025);

\draw (-6.7,-0.5) rectangle (-5.4,5);

\draw[red,rotate=92,x=6.28ex,y=1ex,xshift=6,yshift=170] (0.9,-0.85) cos (1,0) sin (1.25,1) cos (1.5,0) sin (1.75,-1) cos (2,0) sin (2.25,1) cos (2.5,0) sin (2.75,-1) cos (3,0) sin (3.25,1) cos (3.5,0) sin (3.6,-0.85);
\draw[rotate=92,xshift=6,yshift=170] (0.5,-0.025) .. controls (1.75,0.035) and (2.75,0.035) .. (4,-0.025);

\draw (-6.7,15.5) rectangle (-5.4,21);

\draw[red,xscale=-1,yscale=-4] (6.05,-4.55) node(A0) [place,scale=0.2]{} to [in=5,out=55,looseness=8mm,loop] () to [in=65,out=115,looseness=8mm,loop] () to [in=125,out=175,looseness=8mm,loop] () to [in=185,out=235,looseness=8mm,loop] () to [in=245,out=295,looseness=8mm,loop] () to [in=305,out=355,looseness=8mm,loop] ();

\draw[xscale=-1,yscale=-4] (0.2,-0.1) node(B0) []{\textcolor{red}{\footnotesize{$R$}}};
\draw[xscale=-1,yscale=-4] (0.25,-1.05) node(0B) []{\footnotesize{$Z$}};

\draw[xscale=-1,yscale=-4] (6.25,-0.1) node(B1) []{\textcolor{red}{\footnotesize{$D_B$}}};
\draw[xscale=-1,yscale=-4] (6.25,-1.05) node(1B) []{\footnotesize{$E$}};

\draw[xscale=-1,yscale=-4] (6.4,-5) node(B2) []{\textcolor{red}{\footnotesize{$D_A$}}};

\draw[xscale=-1,yscale=-4] (-0.9,-0.125) node(C0) []{$S$};
\draw[xscale=-1,yscale=-4] (7,-0.125) node(C1) []{$B$};
\draw[xscale=-1,yscale=-4] (7,-4.125) node(C2) []{$A$};

\draw[->] (-0.9,2.25) -- (-5.2,2.25) node[midway,above] {$\beta$};

\draw[->] (-6.05,5.8) -- (-6.05,14.7) node[midway,right] {$\pi$};

\draw[->] (-0.9,4.8) -- (-5.2,15.5) node[midway,below=3pt] {$\alpha$}; 
\end{tikzpicture}

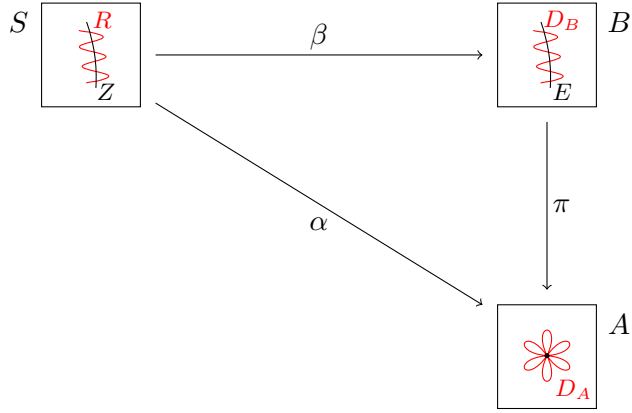
\captionof{figure}{The triple covers $\alpha$ and $\beta$ }
\end{center}
Furthermore, the Stein factorization of $\alpha \colon S \to A$ is described in the diagram 
\begin{equation*} \label{dia.alpha} 
\xymatrix{
S \ar[r]^{c_Z} \ar[dr]_{\alpha} & \tilde{S} \ar[d]^{\tilde{\alpha}} \\
 & A,}
\end{equation*}
where $c_Z \colon S \to \tilde{S}$ is the birational morphism given by the contraction of the elliptic curve $Z$. Since $Z^2=-3$, the normal surface $\tilde{S}$ has a Gorenstein elliptic singularity of type $\tilde{E}_6$, see \cite[Theorem 7.6.4]{Is14}.  

Recall that an \emph{irrational pencil} (or \emph{irrational fibration}) on a smooth, projective surface is a surjective morphism with connected fibres over a curve of positive genus. 

\begin{proposition} \label{prop:no-irr-penc}
The general surface $S$ contains no irrational pencils.
\end{proposition}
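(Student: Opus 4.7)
Suppose $f\colon S\to B'$ is an irrational pencil, i.e.\ a surjective morphism with connected fibres onto a smooth curve $B'$ with $g(B')\geq 1$. The plan is to rule out separately the cases $g(B')\geq 2$ and $g(B')=1$, and to invoke ``general $S$'' only in the second case.

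For $g(B')\geq 2$, the strategy is to apply the classical theorem of Castelnuovo--de Franchis: the existence of such an $f$ would force the wedge map
\begin{equation*}
\wedge^2\colon H^0(S,\Omega^1_S)\otimes H^0(S,\Omega^1_S)\longrightarrow H^0(S,\Omega^2_S)
\end{equation*}
to have a $2$-dimensional isotropic subspace. Since $q(S)=2$, such a subspace would be all of $H^0(S,\Omega^1_S)$, so $\wedge^2$ would vanish identically. I would contradict this by using that $\alpha\colon S\to A$ is generically finite and surjective, hence $\alpha^*\colon H^0(A,\Omega^1_A)\to H^0(S,\Omega^1_S)$ is an isomorphism of $2$-dimensional spaces and $\alpha^*\colon H^0(A,\Omega^2_A)\to H^0(S,\Omega^2_S)$ is injective; since the wedge product is obviously nonzero on the abelian surface $A$, compatibility of $\alpha^*$ with $\wedge$ shows it is nonzero on $S$. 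This disposes of the case $g(B')\geq 2$ for \emph{every} $S$ in $\mathcal{M}$.

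For $g(B')=1$, the universal property of the Albanese variety gives a factorization $f=h\circ\alpha$ with $h\colon A\to B'=\mathrm{Alb}(B')$ a morphism of abelian varieties (up to translation). Since $f$ is surjective with connected fibres and $\alpha$ is surjective, $h$ must be a surjective homomorphism from the abelian surface $A$ onto the elliptic curve $B'$. The existence of such a surjection is equivalent to $A$ being isogenous to a product of two elliptic curves, in particular not simple. Here I would invoke that, by Remark \ref{rem:all-jacobians} and the surjective morphism $\varsigma\colon\mathcal{M}\to\mathcal{M}_2$, the principally polarized abelian surface $A=J(C_2)$ attached to a general $S\in\mathcal{M}$ is the Jacobian of a general curve of genus $2$, which is well-known to be simple (the locus of decomposable principally polarized abelian surfaces is a proper closed subvariety of $\mathcal{A}_2$). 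Hence no such $h$ exists for general $S$, and the case $g(B')=1$ is ruled out.

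The main (and in fact only non-formal) point is the second case, where one must use the genericity assumption on $C_2$; the first case is purely $2$-form-theoretic and holds for every member of the family. Combining the two cases completes the proof.
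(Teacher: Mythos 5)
Your proof is correct, and it reaches the conclusion by a partly different route. For the base of genus $\geq 2$ you argue with holomorphic forms: pulled-back $1$-forms from the base give a $2$-dimensional isotropic subspace, which by $q(S)=2$ forces the wedge map $H^0(S,\Omega^1_S)\otimes H^0(S,\Omega^1_S)\to H^0(S,\Omega^2_S)$ to vanish, contradicting the nonvanishing of $\alpha^*$ applied to the generator of $H^0(A,\Omega^2_A)$ (note you only need the easy direction of Castelnuovo--de Franchis, namely that pullbacks of $1$-forms from a curve are isotropic). The paper instead bounds $g(W)\leq q(S)=2$ and observes that the universal property of the Albanese map gives a morphism $A\to J(W)$ whose image is isomorphic to $W$; since the image of an abelian surface in $J(W)$ is a translated abelian subvariety, $g(W)=2$ is impossible. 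Both arguments are standard and valid, and yours has the small advantage of handling all $g\geq 2$ at once without invoking the bound on the genus of the base. For the elliptic-base case your argument coincides with the paper's: factor through the Albanese to get a surjection $A\to B'$, so $A$ is non-simple, while $A\cong J(C_2)$ is simple for general $C_2$. One caveat: your parenthetical justification of simplicity is not accurate as stated, since non-simple principally polarized abelian surfaces are not the same as decomposable ones; the locus of non-simple ones is a countable union of Humbert surfaces (in particular not closed, and in fact dense), so ``general'' here must be understood accordingly. The cleanest fix is to do what the paper does and cite Koizumi's theorem that the Jacobian of a generic curve of genus $2$ is simple.
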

\begin{proof}
Assume that $\phi \colon S \to W$ is an irrational pencil on $S$. Since $q(S)=2$, we have either $g(W)=1$ or $g(W)=2$. On the other hand, using the embedding $W \hookrightarrow J(W)$ and the universal property of the Albanese map, we obtain a morphism $A \to J(W)$ whose image is isomorphic to the curve $W$. This rules out the case $g(W)=2$, hence $W$ is an elliptic curve and so $A$ is a non-simple abelian surface. The proof is now complete, because $A$ is isomorphic to the Jacobian variety $J(C_2)$, which is known to be simple for a general choice of $C_2$ (\cite[Theorem 3.1]{Ko76}). 
\end{proof}

\section{The moduli space} \label{sec:moduli}
A projective variety $X$ is called \emph{of maximal Albanese dimension} if its Albanese map $\alpha_X \colon X \to \textrm{Alb}(X)$ is generically finite onto its image. For surfaces of general type with irregularity at least $2$, this is actually a topological property, as shown by the result below.
 
\begin{proposition} \label{prop.degree.alb} Let $S$ be a minimal surface of
general type with $q(S) \geq 2$. If $S$ is of maximal Albanese dimension, then the same holds for any surface which is homeomorphic to $S$. Furthermore, in the case $q(S)=2$ the degree of the Albanese map $\alpha \colon S \to A$ is a topological invariant.
\end{proposition}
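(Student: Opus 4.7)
My plan is to find a purely topological reformulation of maximal Albanese dimension and of the degree of the Albanese map, exploiting that the irregularity is itself topological (being half the first Betti number). Specifically, I will show that $S$ is of maximal Albanese dimension if and only if the four-fold cup product
\[
\mu_S \colon \bigwedge^4 H^1(S, \mathbb{Q}) \to H^4(S, \mathbb{Q}) \cong \mathbb{Q}
\]
is non-zero. Since $\mu_S$ depends only on the cohomology ring of $S$, the first statement then follows from the invariance of $H^*(S, \mathbb{Z})$ under homeomorphism.

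The direction ``maximal Albanese dimension implies $\mu_S \neq 0$'' is the easier one. If $\omega_1, \omega_2 \in H^{1,0}(S) = H^0(S, \Omega_S^1)$ satisfy $\omega_1 \wedge \omega_2 \neq 0$ in $H^{2,0}(S)$, then Hodge-Riemann positivity gives $\int_S (\omega_1 \wedge \omega_2) \wedge \overline{(\omega_1 \wedge \omega_2)} \neq 0$. Writing $\omega_i = a_i + i b_i$ with real classes $a_i, b_i \in H^1(S, \mathbb{R})$, a short calculation in the graded-commutative algebra $H^*(S, \mathbb{C})$ yields
\[
\omega_1 \cup \omega_2 \cup \bar\omega_1 \cup \bar\omega_2 = 4\, a_1 \cup b_1 \cup a_2 \cup b_2,
\]
so this cup product of four real degree-one classes is non-zero, proving $\mu_S \neq 0$.

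The converse is where the hypothesis that $S$ has complex dimension two enters crucially, and is the main obstacle. Assume $S$ is not of maximal Albanese dimension, so $\omega \wedge \omega' = 0$ in $H^{2,0}(S)$ for every $\omega, \omega' \in H^{1,0}(S)$. Since $S$ is a surface, all Hodge components of $H^4(S, \mathbb{C})$ other than $H^{2,2}$ vanish, hence any $4$-fold cup product of classes in $H^1(S, \mathbb{C})$ equals its $(2,2)$-part. Expanding $e_i \in H^1(S, \mathbb{C})$ in its $(1,0)$ and $(0,1)$ components and retaining the $(2,2)$-terms, graded commutativity allows each such term to be rewritten as $\pm(p \wedge p') \cup (\bar q \wedge \bar q')$ with $p, p', q, q' \in H^{1,0}$. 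All these vanish by hypothesis, so $\mu_S = 0$, establishing the equivalence and hence the first statement.

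For the second statement, assume $q(S) = 2$, so $H^1(S, \mathbb{Z})$ is free abelian of rank $4$. The universal property of $\mathrm{Alb}(S)$ implies that the Albanese morphism $\alpha \colon S \to A$ induces an isomorphism $\alpha^* \colon H^1(A, \mathbb{Z}) \xrightarrow{\sim} H^1(S, \mathbb{Z})$. Taking an integral basis $f_1, \dots, f_4$ of $H^1(A, \mathbb{Z})$, the cup product $f_1 \cup \cdots \cup f_4$ generates $H^4(A, \mathbb{Z})$, and the identity $\alpha^*[\mathrm{pt}]^\vee_A = \deg(\alpha) \cdot [\mathrm{pt}]^\vee_S$ for the topological degree yields
\[
(\alpha^* f_1) \cup \cdots \cup (\alpha^* f_4) = \pm\, \deg(\alpha).
\]
Hence $\deg \alpha$ coincides, up to sign, with the cup product of any integral basis of $H^1(S, \mathbb{Z})$, and is therefore a topological invariant of $S$.
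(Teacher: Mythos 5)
Your proposal is correct, but it takes a different route from the paper, whose proof is by citation alone: the authors simply invoke the results of \cite{Ca91} together with \cite[Proposition 3.1]{PePol13b}, where essentially the two facts you prove are established. In effect you have reconstructed the content of those references in a self-contained way: the characterization of maximal Albanese dimension by the non-vanishing of the four-fold cup product $\bigwedge^4 H^1(S,\mathbb{Q})\to H^4(S,\mathbb{Q})$ (your Hodge-theoretic reduction to the image of $\bigwedge^2 H^{1,0}\to H^{2,0}$ is the standard argument behind Catanese's theorem), and the identification of $\deg\alpha$, when $q=2$, with the positive generator of the image of $\bigwedge^4 H^1(S,\mathbb{Z})\to H^4(S,\mathbb{Z})\cong\mathbb{Z}$, which is exactly how the degree statement is obtained in \cite{PePol13b}. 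Two points you leave implicit, both standard and easily supplied: first, you use without comment that $S$ has maximal Albanese dimension if and only if $\omega_1\wedge\omega_2\neq 0$ for some $\omega_1,\omega_2\in H^0(S,\Omega^1_S)$; this holds because every holomorphic $1$-form on $S$ is pulled back from $\mathrm{Alb}(S)$, and $d\alpha$ has generic rank $2$ when the Albanese image is a surface, while it has rank at most $1$ everywhere when the image is a curve. Second, the converse direction of your equivalence applies Hodge theory to the homeomorphic surface, so one should either read ``surface'' as projective (as the paper intends, the application being to minimal surfaces of general type) or note that a compact complex surface homeomorphic to $S$ has even first Betti number and is therefore K\"ahler. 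What your route buys is a complete, purely cohomological proof readable without outside sources; what the paper's citation buys is brevity and the more general framework of \cite{Ca91}.
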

\begin{proof}
This follows by the results of  \cite{Ca91}, see for instance \cite[Proposition 3.1]{PePol13b}.
\end{proof}
Proposition \ref{prop.degree.alb} allows us to study
the deformations of $S$ by relating them to those of the flat triple cover $\beta \colon S \to B$. Since the trace map provides a splitting of the short exact sequence 
\begin{equation*}
0 \to \oo_B \to \beta_* \oo_S \to \mathcal{E}_{\beta} \to 0, 
\end{equation*}
we obtain a direct sum decomposition
\begin{equation} \label{eq:Tsch} 
\beta_* \oo_S = \oo_B \oplus \mathcal{E}_{\beta},
\end{equation}
where $\mathcal{E}_{\beta}$ is a vector bundle of rank $2$ on $B$ which satisfies
\begin{equation} \label{eq.coh.E}
h^0(B, \, \mathcal{E}_{\beta})=0, \quad h^1(B, \, \mathcal{E}_{\beta})=0, \quad h^2(B, \, \mathcal{E}_{\beta})=1
\end{equation}
and that, according to \cite{Mir85}, is called the \emph{Tschirnhausen bundle} of $\beta$.

As in \cite[Section 3]{PePol13b} we have a commutative diagram
\begin{equation} \label{dia.TS}
\begin{split}
\xymatrix{ &  &0  \ar[d]  & 0  \ar[d]  & \\
0 \ar[r] & T_S \ar[r]^{d \beta} \ar@{=}[d] & \beta^*T_B
\ar[r] \ar[d]& \mathcal{N}_{\beta} \ar[r] \ar[d]  & 0\\
0 \ar[r] & T_S \ar[r]^{d \alpha}  & \alpha^*T_A \ar[r] \ar[d] &
\mathcal{N}_{\alpha}
\ar[r] \ar[d]& 0 \\
 & & \oo_{Z}(-Z) \ar@{=}[r] \ar[d] & \oo_{Z}(-Z) \ar[d] & 
 \\
&  & 0  & 0 &  }
\end{split}
\end{equation}
whose central column is the pullback via $\beta \colon S \to B$ of a sequence 
\begin{equation} \label{eq.blow-up.tangent}
0 \lr T_B \stackrel{d \pi}{\to} \pi^*T_A \to \oo_E(-E) \to 0,
\end{equation}
see \cite[p. 73]{Sern06}. The normal sheaf $\mathcal{N}_{\alpha}$ of $\alpha \colon S \to A$ is supported on the set of critical points of $\alpha$, namely on the reducible divisor $R + Z$. Analogously, the normal sheaf $\mathcal{N}_{\beta}$ of $\beta \colon S \to B$  is supported on the set of critical points of $\beta$, namely on $R$.  
\begin{lemma} \label{lem:N-beta}
We have
\begin{equation} \label{eq:N-beta}
\mathcal{N}_{\beta} = (N_{R/S})^{\otimes 2} = \mathcal{O}_R(2R).
\end{equation}
Hence all first-order deformations of $\beta \colon S \to B$ leaving $B$ fixed are trivial.
\end{lemma}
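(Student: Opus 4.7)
My plan is to identify the normal sheaf $\mathcal{N}_\beta$ explicitly and then observe that its space of global sections vanishes for pure degree reasons. The starting point is that by Proposition \ref{prop:invariants-S} the morphism $\beta$ is simply ramified along $R$ with branch divisor $D_B$, and $\beta|_R\colon R\to D_B$ is an isomorphism (since a simple ramification point is in bijection with its image in the branch locus). Working in local analytic coordinates $(x,y)$ on $S$ and $(u,v)$ on $B$ with $R=\{x=0\}$, $D_B=\{u=0\}$ and $\beta(x,y)=(x^2,y)$, the differential $d\beta$ has matrix $\mathrm{diag}(2x,1)$, so $\mathcal{N}_\beta=\mathrm{coker}(d\beta)$ is locally $\mathcal{O}_R$, supported on $R$. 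Moreover at each point of $R$ the image of $d\beta|_R$ is canonically $\beta|_R^* T_{D_B}$ inside $(\beta^*T_B)|_R=\beta|_R^*(T_B|_{D_B})$, which yields a canonical isomorphism $\mathcal{N}_\beta\cong \beta|_R^* N_{D_B/B}$.

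The next step is to upgrade this to a global formula. Simple ramification along $R$ means precisely that $\beta^*D_B=2R$ as Cartier divisors on $S$, hence $\beta^*\mathcal{O}_B(D_B)\cong \mathcal{O}_S(2R)$. Restricting this isomorphism to $R$ I obtain
\[
\mathcal{N}_\beta\;\cong\;\beta|_R^* N_{D_B/B}\;=\;\beta|_R^*\bigl(\mathcal{O}_B(D_B)|_{D_B}\bigr)\;\cong\;\mathcal{O}_S(2R)|_R\;=\;N_{R/S}^{\otimes 2}\;=\;\mathcal{O}_R(2R),
\]
which is the content of \eqref{eq:N-beta}.

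For the second assertion I would invoke the standard fact (see \cite[Prop. 3.4.2]{Sern06}, and compare the analogous argument in \cite[\S 3]{PePol13b}) that first-order deformations of $\beta\colon S\to B$ with target $B$ fixed are classified by $H^0(S,\mathcal{N}_\beta)$. Since $R\cong C_2$ has genus $2$ and $R^2=-2$ by Proposition \ref{prop:invariants-S}, the line bundle $\mathcal{O}_R(2R)$ has degree $2R^2=-4<0$, and therefore $H^0(R,\mathcal{O}_R(2R))=0$; the triviality of first-order deformations of $\beta$ with $B$ fixed follows.

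The main delicate point is not the degree computation but the identification $\mathcal{N}_\beta\cong \beta|_R^* N_{D_B/B}$: one has to check that it is globally canonical and not merely local, which boils down to observing that $\beta|_R$ is an isomorphism so $d\beta$ sends $T_R$ isomorphically onto $\beta|_R^* T_{D_B}$, while the normal direction to $R$ is killed by $d\beta|_R$ because $d\beta(\partial_x)=2x\partial_u$ vanishes on $\{x=0\}$. Once this is granted, the isomorphism with $N_{R/S}^{\otimes 2}$ is a formal consequence of the divisor identity $\beta^*D_B=2R$.
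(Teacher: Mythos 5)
Your strategy --- an explicit local computation of $\operatorname{coker}(d\beta)$ along $R$, the canonical identification $\mathcal{N}_\beta\cong\beta|_R^*N_{D_B/B}$, and then rigidity from negativity of the degree via Sernesi --- is sound and more self-contained than the paper's proof, which simply quotes \cite[Lemma 3.2]{Rol10} for the identification $\mathcal{N}_\beta=(N_{R/S})^{\otimes 2}$ and \cite[Corollary 3.4.9]{Sern06} for the rigidity statement. The local analysis (matrix $\mathrm{diag}(2x,1)$, cokernel an invertible $\mathcal{O}_R$-module generated by the normal direction to $D_B$) and the fact that $\beta|_R\colon R\to D_B$ is an isomorphism are correct.

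However, the step ``simple ramification along $R$ means precisely that $\beta^*D_B=2R$'' is false, because $\beta$ is a \emph{triple} cover simply branched over $D_B$: the fibre over a point of $D_B$ consists of one ramification point on $R$ \emph{and} one further point at which $\beta$ is \'etale. Hence $\beta^{-1}(D_B)$ has a second component, which in the notation of Section \ref{sec:prod-quot} is exactly $D_S=u(\Sigma_0)$ (the branch curve of $u\colon T\to S$), and the correct divisor identity is $\beta^*D_B=2R+D_S$. Your identity also fails numerically: it would give $3D_B^2=(\beta^*D_B)^2=4R^2=-8$, whereas adjunction on $B$ (or Lemma \ref{lem:delta}) gives $D_B^2=-4$; by contrast $(2R+D_S)^2=4(-2)+0+(-4)=-12=3D_B^2$, consistently. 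Fortunately your conclusion survives, because $D_S$ and $R$ are disjoint (see Section \ref{sec:prod-quot}), so restricting to $R$ yields $\beta|_R^*N_{D_B/B}=\mathcal{O}_S(2R+D_S)|_R=\mathcal{O}_R(2R)=(N_{R/S})^{\otimes 2}$, and the degree computation $2R^2=-4<0$ together with $H^0(S,\mathcal{N}_\beta)=H^0(R,\mathcal{O}_R(2R))=0$ and \cite[Corollary 3.4.9]{Sern06} finishes the proof exactly as you intend. So replace the assertion $\beta^*D_B=2R$ by $\beta^*D_B=2R+D_S$ together with the remark $D_S\cap R=\emptyset$; with that correction your argument is complete and constitutes a legitimate alternative to citing \cite{Rol10}.
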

\begin{proof}
Since $R$ is smooth, the first statement is a consequence of  \cite[Lemma 3.2]{Rol10}. Furthermore, we observe that $R^2=-2$ implies that the line bundle $\mathcal{N}_{\beta}$ has negative degree on $R$, hence $H^0(R, \, \mathcal{N}_{\beta})=0.$ By \cite[Corollary 3.4.9]{Sern06}, this shows that $\beta \colon S \to B$ is rigid as a morphism with fixed target.
\end{proof}
Note the the last statement of Lemma \ref{lem:N-beta} agrees with the fact that the branch locus $D_B$ of $\beta \colon S \to B$ is a rigid divisor in $B$.  

\begin{lemma}\label{lem:h1Th0N}
We have
\begin{equation*}
h^1(S, \, T_S)=h^0(R + Z, \, \mathcal{N}_{\alpha})+1\geq 3.
\end{equation*}
\end{lemma}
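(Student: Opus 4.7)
My approach is to extract the equality from the long exact sequence in cohomology of the bottom row of diagram \eqref{dia.TS}, exploiting that $\alpha^*T_A$ is trivial of rank $2$.

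Since $K_S$ is ample (Proposition \ref{prop:invariants-S}), $h^0(S,T_S)=0$; since $T_A$ is trivial and $q(S)=2$, we have $\alpha^*T_A\cong\mathcal{O}_S^{\oplus 2}$, so $h^0(\alpha^*T_A)=2$ and $h^1(\alpha^*T_A)=2q(S)=4$. The long exact sequence becomes
\begin{equation*}
0\to\CC^2\to H^0(\mathcal{N}_\alpha)\to H^1(T_S)\to H^1(\alpha^*T_A)=\CC^4\stackrel{\phi}{\to}H^1(\mathcal{N}_\alpha),
\end{equation*}
giving both $h^0(\mathcal{N}_\alpha)\geq 2$ and
\begin{equation*}
h^1(T_S)=(h^0(\mathcal{N}_\alpha)-2)+\dim\ker\phi.
\end{equation*}
The lemma thus reduces to the assertion $\dim\ker\phi=3$, i.e.\ that $\phi$ has one-dimensional image; combined with $h^0(\mathcal{N}_\alpha)\geq 2$ this yields both the equality $h^1(T_S)=h^0(\mathcal{N}_\alpha)+1$ and the bound $h^1(T_S)\geq 3$.

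To establish $\dim\ker\phi=3$, I would use the Albanese isomorphism $\alpha^*\colon H^1(\mathcal{O}_A)\xrightarrow{\sim}H^1(\mathcal{O}_S)$ to identify $H^1(\alpha^*T_A)\cong H^1(A,T_A)$. The map $H^1(T_S)\to H^1(T_A)$ is then the differential of the functorial assignment $S\mapsto\mathrm{Alb}(S)$, recording the first-order deformation of $A$ induced by a deformation of $S$. Since $A=J(C_2)$ by construction, any such deformation preserves the principal polarization, so the image is contained in the tangent space to $\mathcal{A}_2$ at $A$, a $3$-dimensional subspace inside the $4$-dimensional $H^1(T_A)$. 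For the reverse inclusion, Proposition \ref{prop:def-S-C2} provides a quasi-finite, surjective moduli morphism $\varsigma\colon\M\to\M_2\cong\mathcal{A}_2$; its differential at $S$ surjects onto the tangent of $\M_2$ at $C_2$, so the image of $H^1(T_S)$ fills the full $3$-dimensional ppas tangent. Hence $\dim\ker\phi=3$.

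The main obstacle is the upper bound, namely justifying that every first-order deformation of $S$ preserves the principal polarization of $A$; this should be a Hodge-theoretic consequence of the identification $A=J(C_2)$, but requires careful argument. A purely diagrammatic alternative would exploit the top row $0\to T_S\to\beta^*T_B\to\mathcal{N}_\beta\to 0$, using $h^0(\mathcal{N}_\beta)=0$ from Lemma \ref{lem:N-beta} to get $H^1(T_S)\hookrightarrow H^1(\beta^*T_B)$, and the right column of \eqref{dia.TS} (together with $h^0(\mathcal{O}_Z(-Z))=3$, $h^1(\mathcal{O}_Z(-Z))=0$ since $Z$ is elliptic with $Z^2=-3$) to obtain $h^1(\beta^*T_B)=5$ with a surjection $H^1(\beta^*T_B)\twoheadrightarrow H^1(\alpha^*T_A)$ of one-dimensional kernel; the desired dimension count would then follow from locating $H^1(T_S)$ relative to this kernel.
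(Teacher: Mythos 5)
Your skeleton coincides with the paper's proof: take the cohomology sequence of the central row of \eqref{dia.TS}, use $h^0(S,T_S)=0$ and $\alpha^*T_A\cong\oo_S^{\oplus 2}$, and reduce the lemma to showing that the map $\varepsilon\colon H^1(S,T_S)\to H^1(S,\alpha^*T_A)\cong H^1(A,T_A)$ has rank exactly $3$ (your condition $\dim\ker\phi=3$ is the same statement, by exactness). The gap is precisely in how you justify that rank. For the lower bound you invoke Proposition \ref{prop:def-S-C2}, but that proposition comes after this lemma and its proof uses $\dim\mathcal{M}=\dim H^1(S,T_S)=3$, i.e. Proposition \ref{prop:h1T=3}, which in turn rests on the present lemma: the appeal is circular. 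Moreover, quasi-finiteness plus surjectivity of $\varsigma$ would only give surjectivity of its differential at a \emph{general} point, while the stated equality (and Proposition \ref{prop:h1T=3}) is needed for \emph{every} $S$ in the family. The non-circular input is Proposition \ref{prop:allcurves} (equivalently Remark \ref{rem:all-jacobians}): since every Jacobian $J(C_2)$ arises from the construction, the first-order deformations of $S$ already dominate the first-order algebraic deformations of $A$, giving $\mathrm{rank}(\varepsilon)\geq 3$ at every point; this is exactly how the paper argues.

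For the upper bound you propose that every first-order deformation of $S$ preserves the principal polarization of $A=J(C_2)$, and you yourself flag this as unjustified. The paper uses a weaker and sufficient fact: the Albanese variety of any deformation of the projective surface $S$ must remain algebraic, so the image of $\varepsilon$ inside $H^1(A,T_A)\cong\CC^4$ is confined to the $3$-dimensional algebraic (polarization-preserving) directions, whence $\mathrm{rank}(\varepsilon)\leq 3$. Finally, your purely diagrammatic alternative (top row plus right column, with $h^0(\mathcal{N}_\beta)=0$ and $h^1(\beta^*T_B)=5$) is numerically consistent but cannot close the argument on its own: the dimension counts do not locate the image of $H^1(S,T_S)$ inside $H^1(S,\beta^*T_B)$, so some geometric input of the above kind (dominating the deformations of $A$, and algebraicity of the Albanese of deformations) is unavoidable.
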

\begin{proof}
Let us write down the cohomology exact sequence associated with the short exact sequence in the central row of (\ref{dia.TS}), recalling first that $S$ is a surface of general type and therefore $h^0(S,T_S)=0$: 
\begin{equation*}
0 \longrightarrow H^0(S, \, \alpha^*T_A)\cong \CC^2  \longrightarrow H^0(R + Z, \, \mathcal{N}_{\alpha})  \longrightarrow  H^1(S, \, T_S) \stackrel{\varepsilon}{\longrightarrow} H^1(S,\, \alpha^*T_A) 
\end{equation*}
Then the claim will follow if we show that $\textrm{rank}(\varepsilon)=3$, and this can be done by using the same argument as in \cite[Section 3]{PePol13b}. 

More precisely, since $T_A$ is trivial and the Albanese map induces an isomorphism $H^1(S, \, {\mathcal O}_S) \cong H^1(A, \, {\mathcal O}_A)$, then  $H^1(S,\, \alpha^*T_A) \cong  H^1(A,\, T_A)$ and we can see $\varepsilon$ as the map $H^1(S, \, T_S) \rightarrow H^1(A, \, T_A)$ induced on first-order deformations by the Albanese map.
By Remark \ref{rem:all-jacobians} the first-order deformations of $S$ dominate the first-order algebraic deformations of $A$, so  $\textrm{rank}(\varepsilon) \geq 3$; on the other hand, the Albanese variety of every deformation of $S$ has to remain algebraic, so $\textrm{rank}(\varepsilon) \leq 3$ and we are done.
\end{proof}

Thus, in order to understand the first-order deformations of $S$, we can study $\mathcal{N}_{\alpha}$.
\begin{lemma}\label{lem:locallyfree}
The sheaf $\mathcal{N}_{\alpha}$ is locally free of rank $1$ on the reducible curve $R + Z$.
\end{lemma}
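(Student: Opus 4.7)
The strategy is to verify the statement locally at each point of the support $R + Z$. I start by noting that $\mathcal{N}_\alpha = \mathrm{coker}(d\alpha \colon T_S \to \alpha^* T_A)$ is the cokernel of an injective morphism of rank $2$ locally free sheaves, and since $\alpha$ is generically finite, its degeneracy scheme is exactly the ramification divisor $R + Z$. Thus $\mathcal{N}_\alpha$ is a coherent sheaf supported on $R + Z$, and the question is whether its stalks are free of rank $1$ over the corresponding stalks of $\mathcal{O}_{R+Z}$.

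At a point $p \in R \cup Z$ that lies on exactly one of the two curves, the conclusion follows immediately from the exact sequence appearing in the central column of \eqref{dia.TS},
\[
0 \to \mathcal{N}_\beta \to \mathcal{N}_\alpha \to \mathcal{O}_Z(-Z) \to 0,
\]
combined with Lemma \ref{lem:N-beta} identifying $\mathcal{N}_\beta$ with the line bundle $\mathcal{O}_R(2R)$: since $R$ and $Z$ are disjoint near $p$, exactly one of the outer terms vanishes in a neighbourhood, and $\mathcal{N}_\alpha$ coincides there with the other, which is a line bundle on the unique component of $R+Z$ passing through $p$.

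The heart of the proof is the analysis at the six nodes in $R \cap Z$, which (as recorded in the proof of Proposition \ref{prop:invariants-S}) sit above the six transverse intersection points of $D_B$ and $E$. At such a node $p$ I would choose local coordinates $(s,t)$ on $B$ centred at $\beta(p)$ with $D_B = \{t = 0\}$ and $E = \{s = 0\}$, and local coordinates $(u, v)$ on $S$ centred at $p$ with $R = \{v = 0\}$ and $Z = \{u = 0\}$, so that $\beta(u, v) = (u, v^2)$; in a suitable chart of the blow-up, $\pi(s, t) = (s, st)$, giving $\alpha(u, v) = (u, uv^2)$. The Jacobian
\[
d\alpha = \begin{pmatrix} 1 & 0 \\ v^2 & 2uv \end{pmatrix}
\]
has determinant $2uv$, confirming that the degeneracy scheme is scheme-theoretically $R + Z$ locally. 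An elementary row reduction replaces $d\alpha$ by $\mathrm{diag}(1,\, 2uv)$, so the cokernel is locally isomorphic to $\mathcal{O}_S / (uv) \cong \mathcal{O}_{R+Z}$, a free module of rank $1$ over the nodal local ring $\mathcal{O}_{R+Z, p}$. The main obstacle is writing down the correct local models for $\beta$ and $\pi$ at a node; once these are in place the conclusion is a one-line computation.
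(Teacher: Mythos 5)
Your proposal is correct and takes essentially the same route as the paper: the paper also proves the lemma by a pointwise local-coordinate computation of $d\alpha$, using at the nodes the normal form $(u,\,v)\mapsto(u^2v,\,v)$, which is your $(u,\,v)\mapsto(u,\,uv^2)$ after relabelling coordinates, and concluding via Nakayama's lemma from the fact that $d\alpha$ has rank $1$ at every point of $R+Z$. The only (harmless) differences are that you treat points lying on a single component via the last column of \eqref{dia.TS} together with Lemma \ref{lem:N-beta} instead of by local models, and at the nodes you identify the cokernel explicitly as $\mathcal{O}_{R+Z}$ by row reduction rather than invoking the constant-fibre-dimension criterion.
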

\begin{proof}
By a standard application of Nakayama's lemma (see for instance \cite[Corollary 5.3.4]{Kempf93}), it suffices to check that the $\mathbb{C}$-vector space $\mathcal{N}_{\alpha, \, x}/\mathfrak{m}_x \mathcal{N}_{\alpha, \, x}$ has dimension $1$ for all $x \in R + Z$, where $\mathfrak{m}_x \subset \mathcal{O}_{R + Z, \, x}$ is the maximal ideal. Equivalently, we will check that the vector bundle map $d \alpha \colon T_S \to \alpha^* T_A $ has rank $1$ at each point $x \in R + Z$. Let us distinguish three cases. 
\begin{itemize}
\item If $x \in R \setminus Z$, then $\alpha$ is locally of the form $(u, \, v)\mapsto (u^2, \, v)$, with $x=(0, \, 0)$ and $R$ given by $u=0$. Then $d \alpha$ is the linear map associated with the matrix
\begin{equation*}
\left(%
\begin{array}{cc}
  2u & 0  \\
  0 & 1 
\end{array}%
\right),
\end{equation*}
which has rank $1$ at the point $x$.
\item
If $x \in Z \setminus R$, then $\alpha$ is locally a smooth blow-up, hence of the form $(u, \, v)\mapsto (uv, \, v)$, where $x=(0, \, 0)$ and $Z$ corresponds to the exceptional divisor, whose equation is $v=0$. Then $d \alpha$ is the linear map associated with the matrix
\begin{equation*}
\left(%
\begin{array}{cc}
  v & u  \\
  0 & 1 
\end{array}%
\right),
\end{equation*}
which has rank $1$ at the point $x$.
\item Finally, if $x \in R \cap Z$ then $\alpha$ is locally the composition of the two maps above, so of the form $(u, \, v)\mapsto (u^2v, \, v)$, where $x=(0, \, 0)$, the curve $R$ corresponds to the locus $u=0$ and the curve $Z$ to the locus $v=0$. Then $d \alpha$ is the linear map associated with the matrix
\begin{equation*}
\left(%
\begin{array}{cc}
  2uv & u^2  \\
  0 & 1 
\end{array}%
\right),
\end{equation*}
which has rank $1$ at the point $x$.
\end{itemize}
This completes the proof.
\end{proof}
We can be more precise and compute the restrictions of $\mathcal{N}_{\alpha}$ to both curves $R$ and $Z$.

\begin{lemma}\label{lem:restriction}
We have
\begin{equation*}
\mathcal{N}_{\alpha|Z} = \oo_Z(-Z), \quad
\mathcal{N}_{\alpha|R} = \oo_R(2R+Z) = \oo_R(K_R).
\end{equation*}
\end{lemma}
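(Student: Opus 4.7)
The plan is to exploit the right-hand column of diagram \eqref{dia.TS}, the short exact sequence of sheaves on $S$
\begin{equation*}
0 \to \mathcal{N}_\beta \to \mathcal{N}_\alpha \to \mathcal{O}_Z(-Z) \to 0,
\end{equation*}
and restrict it separately to the two components of $R + Z$. For the restriction to $Z$, tensoring the surjection $\mathcal{N}_\alpha \twoheadrightarrow \mathcal{O}_Z(-Z)$ with $\mathcal{O}_Z$ produces a surjection $\mathcal{N}_{\alpha|Z} \twoheadrightarrow \mathcal{O}_Z(-Z)$ of coherent sheaves on the smooth irreducible curve $Z$. By Lemma \ref{lem:locallyfree} the source is a line bundle, the target obviously is, and a surjection between two line bundles on a smooth connected curve must be an isomorphism. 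This settles the first identification.

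For the restriction to $R$, I would first observe that the local model used in the proof of Lemma \ref{lem:locallyfree} shows that $R$ and $Z$ meet transversally at each of their $RZ = 6$ intersection points. This transversality ensures that $\mathrm{Tor}_1^{\mathcal{O}_S}(\mathcal{O}_Z(-Z), \mathcal{O}_R) = 0$ (locally the relevant Koszul relation comes from the regular sequence $u, v$), so tensoring the above sequence with $\mathcal{O}_R$ yields a short exact sequence of sheaves on $R$,
\begin{equation*}
0 \to \mathcal{N}_{\beta|R} \to \mathcal{N}_{\alpha|R} \to \mathcal{O}_Z(-Z)|_R \to 0.
\end{equation*}
By Lemma \ref{lem:N-beta} the leftmost term is $\mathcal{O}_R(2R)$, and transversality implies that the rightmost one is the skyscraper sheaf on $R$ of length $1$ at each of the six points of $R \cap Z$. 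Since $\mathcal{N}_{\alpha|R}$ is a line bundle on $R$ by Lemma \ref{lem:locallyfree}, this sequence exhibits it as $\mathcal{O}_R(2R)$ twisted up by the divisor $Z|_R = R \cdot Z$ of $R$, hence $\mathcal{N}_{\alpha|R} = \mathcal{O}_R(2R + Z)$. The second identification $\mathcal{O}_R(2R + Z) = \mathcal{O}_R(K_R)$ follows immediately from the adjunction formula $K_R = (K_S + R)|_R$ together with the equality $K_S = Z + R$ established in \eqref{eq:K_S}.

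The principal subtlety lies in the $R$-restriction: one must justify the transversality of $R \cap Z$ (needed both for $\mathrm{Tor}_1$ to vanish and to correctly identify the length-six skyscraper as the divisor $Z|_R$ with multiplicity one at each point), and must use the fact that $\mathcal{N}_{\alpha|R}$ is a genuine line bundle rather than just a torsion-free sheaf in order to conclude that the extension is determined up to isomorphism by its degree and support. Both facts are available from the local analysis already carried out in this section, so I do not expect the argument to require anything more than the careful bookkeeping sketched above.
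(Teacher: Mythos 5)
Your proof is correct and takes essentially the same route as the paper: it restricts the last column of diagram \eqref{dia.TS} to $Z$ and to $R$, using Lemma \ref{lem:locallyfree} for local freeness, Lemma \ref{lem:N-beta} for $\mathcal{N}_\beta=\oo_R(2R)$, the transversality of $R\cap Z$ at the six points, and adjunction via $K_S=Z+R$. The only differences are cosmetic: you justify exactness after restriction to $R$ by a Tor-vanishing over $\oo_S$ coming from transversality, and handle the $Z$-restriction via ``a surjection of line bundles is an isomorphism,'' where the paper instead observes that a torsion sheaf admits no nonzero map to a locally free sheaf.
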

\begin{proof}
Let us first apply the functor $\otimes_{\oo_{R + Z}}\oo_Z$ to the exact sequence forming the last column of diagram (\ref{dia.TS}); using \eqref{eq:N-beta}, we get 
\begin{equation*}
\oo_R(2R) \otimes \oo_Z \stackrel{\zeta}{\longrightarrow} \mathcal{N}_{\alpha|Z} \longrightarrow \oo_Z(-Z)  \longrightarrow 0.
\end{equation*}
By Lemma \ref{lem:locallyfree}, the sheaf  $\mathcal{N}_{\alpha|Z}$ is locally free on $Z$; on the other hand, $\oo_R(2R) \otimes \oo_Z$ is a torsion sheaf, hence $\zeta$ is the zero map and so $\mathcal{N}_{\alpha|Z} \cong \oo_Z(-Z)$.

\smallskip

Next, we apply  to the same exact sequence the functor $\otimes_{\oo_{R + Z}}\oo_R$, obtaining
\begin{equation} \label{eq:tensor-OR-1}
\mathcal{T} \stackrel{\tau}{\longrightarrow} \oo_R(2R) \longrightarrow \mathcal{N}_{\alpha|R} \longrightarrow \oo_Z(-Z) \otimes \oo_R \longrightarrow 0.
\end{equation}
Since ${\mathcal T}:=\textrm{Tor}^1_{\mathcal{O}_{R +Z}}(\oo_Z(-Z), \, \oo_R)$ is a torsion sheaf (supported on $R \cap Z$) and $\mathcal{O}_R(2R)$ is locally free on $R$, we deduce that $\tau$ is the zero map and so \eqref{eq:tensor-OR-1} becomes
\begin{equation} \label{eq:tensor-OR-2}
0 \to \oo_R(2R) \longrightarrow \mathcal{N}_{\alpha|R} \longrightarrow \oo_Z(-Z) \otimes \oo_R \longrightarrow 0.
\end{equation}
On the other hand, the curves $R$ and $Z$ intersect transversally at the six Weierstrass points $p_1, \ldots p_6$ of $R$, so we infer 
\begin{equation} \label{eq: OR-OZ}
\oo_Z(-Z) \otimes \oo_R = \oo_Z \otimes \oo_R = \bigoplus_{i=1}^6 \oo_{p_i}.
\end{equation}
Hence \eqref{eq:tensor-OR-2} and \eqref{eq: OR-OZ} yield 
\begin{equation*}
0 {\longrightarrow} \oo_R \longrightarrow \mathcal{N}_{\alpha|R}(-2R) \longrightarrow \bigoplus_{1}^6 \oo_{p_i} \longrightarrow 0,
\end{equation*}
that is the invertible sheaf $\mathcal{N}_{\alpha|R}(-2R)$ has a global section whose divisor is $\sum p_i$. This means $\mathcal{N}_{\alpha|R} \cong \oo_R(2R+\sum p_i)=\oo_R(2R+Z)$. Finally, equation \eqref{eq:K_S} shows that $R+Z$ is a canonical divisor on $S$, so by using adjunction formula we obtain
\begin{equation*}
\oo_R(2R+Z)= \oo_S(K_S+R) \otimes \oo_R = \oo_R(K_R).
\end{equation*}
\end{proof}
We can finally prove
\begin{proposition}\label{prop:h1T=3}
All surfaces $S$ constructed in \emph{Section} \emph{\ref{sec:prod-quot}}  satisfy 
\begin{equation*}
h^1(S, \, T_S)=3.
\end{equation*}
\end{proposition}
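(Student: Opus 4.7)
The plan is to combine the lower bound already obtained in Lemma \ref{lem:h1Th0N} with a matching upper bound on $h^0(R+Z, \, \mathcal{N}_\alpha)$ coming from the description of the restrictions $\mathcal{N}_{\alpha|R}$ and $\mathcal{N}_{\alpha|Z}$ in Lemma \ref{lem:restriction}. Indeed Lemma \ref{lem:h1Th0N} gives
\begin{equation*}
h^1(S, \, T_S) = h^0(R+Z, \, \mathcal{N}_\alpha) + 1 \geq 3,
\end{equation*}
so the whole proposition reduces to proving the inequality $h^0(R+Z, \, \mathcal{N}_\alpha) \leq 2$.

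The main tool will be the standard short exact sequence on the reducible nodal curve $R+Z$, whose two components meet transversally at the six Weierstrass points $p_1, \ldots, p_6$ of $R$:
\begin{equation*}
0 \to \mathcal{N}_\alpha|_Z \bigl(-\textstyle\sum_{i=1}^6 p_i\bigr) \to \mathcal{N}_\alpha \to \mathcal{N}_\alpha|_R \to 0.
\end{equation*}
This is well defined because $\mathcal{N}_\alpha$ is locally free of rank $1$ on $R+Z$ by Lemma \ref{lem:locallyfree}; the left-hand term is the subsheaf of sections of $\mathcal{N}_\alpha|_Z$ vanishing on $R \cap Z$, extended by zero on $R$. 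I would then pass to the associated long exact sequence in cohomology.

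Using Lemma \ref{lem:restriction}, the right-hand term equals $\mathcal{O}_R(K_R)$, so $h^0(R, \, \mathcal{N}_\alpha|_R) = g(R) = 2$. For the left-hand term, again Lemma \ref{lem:restriction} gives $\mathcal{N}_\alpha|_Z = \mathcal{O}_Z(-Z)$, a line bundle of degree $-Z^2 = 3$ on the elliptic curve $Z$; twisting by $-\sum_{i=1}^6 p_i$ produces a line bundle of degree $-3$ on $Z$, which therefore has no global sections. The initial portion of the long exact sequence thus reads
\begin{equation*}
0 \to H^0(R+Z, \, \mathcal{N}_\alpha) \to H^0(R, \, K_R) \cong \mathbb{C}^2,
\end{equation*}
yielding $h^0(R+Z, \, \mathcal{N}_\alpha) \leq 2$.

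Combining this bound with the inequality from Lemma \ref{lem:h1Th0N} forces the equality $h^0(R+Z, \, \mathcal{N}_\alpha)=2$, and the proposition follows. I do not expect any serious obstacle here: all ingredients (the computation of $\mathcal{N}_\alpha|_R$, $\mathcal{N}_\alpha|_Z$ and of $R \cap Z$, together with the local freeness of $\mathcal{N}_\alpha$) have already been established, so the argument amounts to writing down the Mayer--Vietoris type sequence above and computing one degree on an elliptic curve.
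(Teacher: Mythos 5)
Your argument is correct and is essentially the paper's own proof: the sequence you write down is exactly the decomposition sequence $0 \to \mathcal{O}_Z(-R) \to \mathcal{O}_{R+Z} \to \mathcal{O}_R \to 0$ tensored with the invertible sheaf $\mathcal{N}_{\alpha}$ (note $\mathcal{O}_Z(-\sum p_i)=\mathcal{O}_Z(-R)$, so your kernel term is the paper's $\mathcal{O}_Z(-R-Z)$ of degree $-3$), and the conclusion $h^0(R+Z,\,\mathcal{N}_{\alpha})\leq 2$ combined with Lemma \ref{lem:h1Th0N} is the same as in the text. No gaps.
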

\begin{proof}
By Lemma \ref{lem:h1Th0N} it suffices to show the inequality $h^0(R + Z, \, \mathcal{N}_{\alpha})\leq 2$. By \cite[p. 62]{BHPV03} we have a ``decomposition sequence"
\begin{equation*}
0 \to \oo_Z(-R) \to \oo_{R + Z} \to \oo_R \to 0,
\end{equation*}
which gives, tensoring with $\mathcal{N}_{\alpha}$ and using Lemma \ref{lem:restriction},
\begin{equation*}
0 \to \oo_Z(-R-Z) \to \mathcal{N}_{\alpha} \to \oo_R(K_R) \to 0.
\end{equation*}
Since $Z(-R-Z)=-3 <0$, we deduce $H^0(Z, \, \oo_Z(-R-Z))=0$. So 
$H^0(R + Z, \, \mathcal{N}_{\alpha})$ injects into $H^0(R, \, K_R)=\mathbb{C}^2$ and we are done.
\end{proof}
The moduli space of principally polarized abelian surfaces has dimension $3$; moreover,the rigidity of the curve $D_B$ in $B$ implies that the curve $D_A$ has only trivial deformations in $A$. So our surfaces $S$ provide a $3$-dimensional subset $\mathcal{M}$ of the moduli space $\mathcal{M}_{2, \,2, \, 7}^{\textrm{can}}$  of (canonical models of) minimal surfaces of general type with $p_g=q=2$ and $K^2=7$. Because of Proposition \ref{prop:h1T=3}, the corresponding Kuranishi family is smooth; this implies that $\mathcal{M}$ has at most quotient singularities, so it is a normal (and hence generically smooth) open subset of $\mathcal{M}_{2, \,2, \, 7}^{\textrm{can}}.$ In particular, $\mathcal{M}$ provides a dense open set of a generically smooth, irreducible component of this moduli space.

\bigskip
Summing up, we have proven the Main Theorem stated in the introduction, namely

\begin{theorem} \label{thm:main}
There exists a $3$-dimensional family $\mathcal{M}$ of minimal surfaces of general type with $p_g=q=2$ and $K^2=7$ such that, for all elements $S \in \mathcal{M}$, the canonical class $K_S$ is ample and the Albanese map $\alpha \colon S \to A$ is a generically finite triple cover of a principally polarized abelian surface $(A, \, \Theta)$, simply branched over a curve $D_A$ numerically equivalent to $4 \Theta$ having an ordinary sextuple point and no other singularities.
The family $\mathcal{M}$ provides a generically smooth, irreducible, open and normal subset of the Gieseker moduli space $\mathcal{M}^{\mathrm{can}}_{2, \, 2, \, 7}$ of canonical models of minimal surfaces of general type with $p_g=q=2$ and $K^2=7$.
\end{theorem}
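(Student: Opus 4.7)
The Main Theorem is essentially a synthesis of the results already established throughout Sections \ref{sec:prod-quot} and \ref{sec:moduli}, so my plan is to assemble them rather than to prove anything substantially new, and then to draw the moduli-theoretic conclusions.

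For the geometric part concerning individual surfaces $S \in \M$, I would simply collect the content of Section \ref{sec:prod-quot}. Proposition \ref{prop:invariants-S} provides the invariants $p_g(S)=q(S)=2$, $K_S^2=7$ and the ampleness of $K_S$; Proposition \ref{prop: branch} then identifies $\alpha \colon S \to A$ with the Albanese map of $S$, shows that $A$ is a principally polarized abelian surface (it is the Jacobian of $C_2$, with principal polarization $\Theta$), and describes the branch divisor $D_A$ as numerically equivalent to $4\Theta$ with a single ordinary sextuple point. No further argument is needed for the first half of the statement.

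The moduli-theoretic part is where the work of Section \ref{sec:moduli} pays off. I would first perform the dimension count: by Remark \ref{rem:all-jacobians} and Proposition \ref{prop:allcurves}, the natural assignment $S \mapsto C_2$ defines a surjective map $\M \to \M_2$, and the additional data required to produce $S$ from $C_2$ (a nontrivial étale triple cover, i.e.\ a non-zero $3$-torsion point of $J(C_2)$ up to $\mathrm{Aut}(C_2)$) is finite. This shows that $\M$ is irreducible of dimension $3 = \dim \M_2$. Next comes the key input Proposition \ref{prop:h1T=3}: $h^1(S, T_S)=3$ for every $S \in \M$. Since the Kuranishi space has dimension at most $h^1(S, T_S)=3$ but must contain the germ of $\M$ at $[S]$, which already has dimension $3$, the two must coincide and the Kuranishi base is smooth of dimension $3$; in particular deformations of $S$ are unobstructed. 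Locally at $[S]$, the space $\M^{\mathrm{can}}_{2,2,7}$ is therefore a quotient of a smooth $3$-dimensional germ by the finite group $\mathrm{Aut}(S)$, so $\M$ is an open subset with at worst quotient singularities, hence normal and generically smooth, and is thus dense in a generically smooth irreducible component of $\M^{\mathrm{can}}_{2,2,7}$.

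The main obstacle — already overcome earlier in the paper — is Proposition \ref{prop:h1T=3}, whose proof requires both sides of the comparison $h^1(S,T_S) = h^0(R+Z, \mathcal{N}_\alpha)+1$ via the sequences coming from the covers $\beta \colon S \to B$ and $\alpha \colon S \to A$, and the careful analysis of $\mathcal{N}_\alpha$ on the reducible curve $R+Z$. Once the equality $h^1(S,T_S)=3$ is secured, everything else needed for Theorem \ref{thm:main} is formal: openness follows from smoothness of the Kuranishi family and matching dimensions, irreducibility from that of $\M_2$ plus finiteness of the classifying map $\varsigma$, and normality together with generic smoothness from the standard local description of the Gieseker moduli space as a quotient of the Kuranishi base by $\mathrm{Aut}(S)$.
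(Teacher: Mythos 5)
Your proposal follows essentially the same route as the paper: the first half of the statement is exactly the content of Propositions \ref{prop:invariants-S} and \ref{prop: branch}, and the moduli-theoretic half is obtained, as in the paper, by combining the $3$-dimensional count with Proposition \ref{prop:h1T=3} to conclude that the Kuranishi family is smooth of dimension $3$, so that $\mathcal{M}$ is locally a quotient of a smooth $3$-dimensional germ by a finite automorphism group, hence an open, normal, generically smooth subset of $\mathcal{M}^{\mathrm{can}}_{2,\,2,\,7}$.

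The one step that does not hold as you state it is irreducibility: finiteness of $\varsigma \colon \mathcal{M} \to \mathcal{M}_2$ together with irreducibility of $\mathcal{M}_2$ does not imply that $\mathcal{M}$ is irreducible (a disjoint union of two copies of $\mathcal{M}_2$ is finite over $\mathcal{M}_2$). Irreducibility should instead be read off from the construction itself: $\mathcal{M}$ is the image of the irreducible parameter space of pairs of general forms $(r,\,s)$ in \eqref{eq:quad-cub} (equivalently, of pairs consisting of a genus $2$ curve and a nontrivial $3$-torsion subgroup of its Jacobian, which is irreducible since the monodromy acts transitively on such subgroups), and the image of an irreducible variety is irreducible. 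With this replacement your argument is complete and matches the paper's.
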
  

\begin{remark} \label{rem:rito}
By Proposition \ref{prop.degree.alb} the degree of the Albanese map is in our case a topological invariant, so it follows that the surfaces in $\mathcal{M}$ lie in a different connected component of $\mathcal{M}^{\mathrm{can}}_{2, \, 2, \, 7}$ than the only other known example with the same invariants, namely the surface with $p_g=q=2$ and $K^2=7$ constructed in \cite{Ri15}, whose Albanese map is a generically finite \emph{double} cover of an abelian surface with polarization of type $(1, \,2)$. Hence the family $\mathcal{M}$ provides a substantially new piece in the fine classification of minimal surfaces of general type with $p_g=q=2$.
\end{remark}
\bigskip
For every surface $S$ whose isomorphism class $[S]$ belongs to $\mathcal{M}$, the normalization of the branching curve $D_A$ of $\alpha \colon S \to A$ is isomorphic to $C_2$, hence we obtain 
a morphism
\begin{equation*}
\varsigma \colon \M \to \M_2, \quad \varsigma([S]):=[C_2], 
\end{equation*}
where $\mathcal{M}_2$ denotes as usual the coarse moduli space of curves of genus $2$. Note that such a morphism is surjective by Proposition \ref{prop:allcurves}. Correspondingly, we have a morphism of deformation functors, namely
\begin{equation*}
\delta_S \colon \mathrm{Def}_S \to \mathrm{Def}_{C_2}. 
\end{equation*}
The next result clarifies the relation between the deformations of $S$ and those of the curve $C_2$.

\begin{proposition} \label{prop:def-S-C2}
The following holds:
\begin{itemize}
\item[$\boldsymbol{(1)}$] 
$\delta_S \colon \mathrm{Def}_S \to \mathrm{Def}_{C_2}$ is an isomorphism of functors. 
\item[$\boldsymbol{(2)}$] $\varsigma \colon \M \to \mathcal{M}_2$ is a quasi-finite morphism of degree $40$.
\end{itemize}
\end{proposition}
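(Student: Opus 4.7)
For part $\boldsymbol{(1)}$, the strategy is to construct an explicit inverse $\Phi\colon\mathrm{Def}_{C_2}\to\mathrm{Def}_S$. Given an infinitesimal deformation $\mathcal{C}_2/T$ of $C_2$ over an Artinian local base, its relative Jacobian is a deformation of $A=J(C_2)$ and, since $3$ is invertible in characteristic zero, the relative $3$-torsion subscheme is étale over $T$; therefore the class $\eta \in J(C_2)[3]$ determining the cover $C_4\to C_2$ extends uniquely to a relative $3$-torsion section. This produces a relative étale $\mathbb{Z}/3\mathbb{Z}$-cover $\mathcal{C}_4\to\mathcal{C}_2$ deforming $C_4\to C_2$, and applying the construction of Section \ref{sec:prod-quot} to this family yields a deformation $\mathcal{S}/T$ of $S$, which defines $\Phi$. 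One checks directly from the construction that $\delta_S\circ\Phi=\mathrm{id}_{\mathrm{Def}_{C_2}}$ (the normalised branch curve of the Albanese map of $\mathcal{S}/T$ is exactly $\mathcal{C}_2/T$). By Proposition \ref{prop:h1T=3} the Kuranishi family of $S$ is smooth of dimension $3=\dim\mathrm{Def}_{C_2}$, so both deformation functors are smooth of the same dimension, and the identity above forces $\Phi$ (and hence $\delta_S$) to be an isomorphism.

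For part $\boldsymbol{(2)}$, the isomorphism of deformation functors from $(1)$ makes $\varsigma$ étale at every point of $\mathcal{M}$, and in particular quasi-finite. Its degree equals the number of isomorphism classes $[S]\in\mathcal{M}$ with $\varsigma([S])=[C_2]$ for a very general $[C_2]\in\mathcal{M}_2$. By Proposition \ref{prop:allcurves}, every such $[S]$ arises from the construction applied to some étale Galois triple cover $C_4\to C_2$, so it suffices to count such covers and to verify that non-isomorphic covers give non-isomorphic surfaces. The isomorphism classes of étale $\mathbb{Z}/3\mathbb{Z}$-Galois covers of $C_2$ are in bijection with cyclic subgroups of order $3$ in $\mathrm{Pic}^0(C_2)[3]\cong(\mathbb{Z}/3\mathbb{Z})^4$, i.e.\ with lines in a $4$-dimensional $\mathbb{F}_3$-vector space, giving exactly $(3^4-1)/(3-1)=40$. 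For a very general $C_2$ the only nontrivial automorphism is the hyperelliptic involution, which induces $-1$ on $J(C_2)$ and hence preserves every cyclic subgroup, so no further identifications arise.

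The main delicate point will be showing that distinct cyclic subgroups of order $3$ in $J(C_2)[3]$ produce non-isomorphic surfaces $S$. This amounts to reconstructing the pair $(C_4, G)$ from $S$, which can be done via the semi-isogenous mixed surface structure described in Remark \ref{rem:Cancian-Frapporti}, invoking the classification/rigidity results of \cite{CanFr15}; everything else in the argument is either formal deformation theory or a direct combinatorial count.
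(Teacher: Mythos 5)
Your part $\boldsymbol{(1)}$ is sound and close in spirit to the paper's argument: the paper checks that the differential $d\delta_S\colon H^1(S,T_S)\to H^1(C_2,T_{C_2})$ is surjective (because, by Proposition \ref{prop:allcurves}, deformations of $S$ dominate those of $C_2$), notes that both functors are unobstructed with $h^1(S,T_S)=3=h^1(C_2,T_{C_2})$, and concludes via \cite[Corollary 2.3.7]{Sern06}; your relative construction of an inverse $\Phi$ through the relative Jacobian and its \'etale $3$-torsion is a slightly more constructive variant of the same idea and works. Likewise, your count in $\boldsymbol{(2)}$ --- order-$3$ subgroups of $\mathrm{Pic}^0(C_2)[3]\cong(\mathbb{Z}/3\mathbb{Z})^4$, hence $(3^4-1)/2=40$, with the hyperelliptic involution acting as $-1$ and therefore trivially on the set of subgroups --- is exactly the paper's.

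The genuine gap is the step you yourself flag as ``the main delicate point'' and then leave to a vague citation: that non-equivalent covers $C_4\to C_2$ yield non-isomorphic surfaces, i.e.\ that $S$ determines the cover up to equivalence. Appealing to ``classification/rigidity results of \cite{CanFr15}'' does not settle this: what is used from \cite{CanFr15} is smoothness and the computation of invariants of semi-isogenous mixed surfaces, and the uniqueness-of-minimal-realization theorem you would need is Catanese's, which is proved for surfaces isogenous to a product, i.e.\ for \emph{free} actions; it does not apply directly to $S=(C_4\times C_4)/G$, where the involutions $h_i$ fix curves. The paper circumvents this as follows: the Albanese map intrinsically determines the finite triple cover $\beta\colon S\to B$ (with $B$ the blow-up of $A$ at $\alpha(Z)$); since $\langle\xi_{xy}\rangle$ is normal in $H$ with $H/\langle\xi_{xy}\rangle\cong S_3$, the map $v\circ\gamma\colon T\to B$ in diagram \eqref{diag:quotients} is the Galois closure of $\beta$, so $S$ determines $T$; and $T=(C_4\times C_4)/\langle\xi_{xy}\rangle$ \emph{is} isogenous to a product, so the rigidity of the minimal realization \cite[Proposition 3.13]{Ca00} recovers $C_4$ with its $\mathbb{Z}/3\mathbb{Z}$-action, hence the cover $c\colon C_4\to C_2$ up to equivalence. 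Without this reconstruction (or a proved substitute valid in the semi-isogenous, non-free setting), your count only bounds the fibre $\varsigma^{-1}([C_2])$ above by $40$, and the degree statement is not established. A minor additional caveat: ``\'etale at every point of $\mathcal{M}$'' should be phrased at the level of Kuranishi families, since $\mathcal{M}$ and $\mathcal{M}_2$ are coarse spaces with quotient singularities; quasi-finiteness still follows, as in the paper, from the uniform bound on the fibres.
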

\begin{proof}
$\boldsymbol{(1)}$ Since $\dim \mathcal{M} = \dim H^1(S, \, T_S)=3$, the functor $\mathrm{Def}_S$ is unobstructed; moreover, the functor $\mathrm{Def}_{C_2}$ is  clearly unobstructed, too. 
Proposition \ref{prop:allcurves} implies that the first-order deformations of $S$ dominate the first-order deformations of $C_2$, so the differential map
\begin{equation} \label{eq:differential}
d\delta_S \colon H^1(S, \,T_S) \to H^1(C_2, \, T_{C_2})
\end{equation}
is surjective, and hence it is an isomorphism because $H^1(S, \,T_S)$ and $H^1(C_2, \, T_{C_2})$ have the same dimension. Since $\mathrm{Def}_S$ and $\mathrm{Def}_{C_2}$ are both unobstructed, this shows that $\delta_S$ is an isomorphism of functors, see \cite[Corollary 2.3.7 and Remark 2.3.8]{Sern06}.
\bigskip

$\boldsymbol{(2)}$ We have to show that, for each  $C_2 \in \mathcal{M}_2$, the cardinality of $\varsigma^{-1}(C_2)$ is at most $40$ and that it is exactly $40$ for a general choice of $C_2$.

Remark that, once $C_2$ is fixed, the  \'etale $\mathbb{Z}/3 \mathbb{Z}$-cover $c \colon C_4 \to C_2$ completely determines $S$. Conversely, we claim that, starting from $S$, it is possible to reconstruct the \'etale morphism $c \colon C_4 \to C_2$ up to automorphisms of $C_2$ and $C_4$.  In fact, the subgroup $\xi_{xy}$ is normal in $H$ and the quotient $H / \langle  \xi_{xy} \rangle$ is isomorphic to $S_3$, hence looking at diagram \eqref{diag:quotients} we see that the map
\begin{equation*}
v \circ \gamma \colon T \to B=\mathrm{Sym}^2(C_2)
\end{equation*}     
yields the Galois closure of the triple cover $\beta \colon S \to B$. This shows that $S$ determines the quasi-bundle $T=(C_4 \times C_4)/ \langle \xi_{xy} \rangle$. On the other hand, since the action of $\langle \xi_{xy} \rangle$ on $C_4 \times C_4$ is faithful, if we know $T$ we can reconstruct $C_4$ and the the  \'etale $\mathbb{Z}/3 \mathbb{Z}$-cover $c \colon C_4 \to C_2$ up to automorphisms by using the rigidity result for minimal realizations of surfaces isogenous to a product proven in \cite[Proposition 3.13]{Ca00}.   

Summing up, the cardinality of $\varsigma^{-1}(C_2)$ equals the number of Galois \'etale triple covers $c \colon C_4 \to C_2$ up to equivalence. Here by ``equivalence of covers" we intend commutative diagrams of the form
\begin{equation*}
\begin{split}
   \xymatrix{ 
       C_4 \ar[d]_{c} \ar[r]^-{\varphi_4} & C_4 \ar[d]^-{c} \\  
 C_2 \ar[r]^-{\varphi_2}  & C_2,}
     \end{split}
\end{equation*}
where the horizontal arrows are automorphisms of the corresponding curves. In particular, as explained for instance in \cite{pardini}, if $\varphi_2=\mathrm{id}_{C_2}$ then the number of equivalence classes of Galois triple covers $c \colon C_4 \to  C_2$ coincides with the number of distinct subgroups of order $3$ in $\mathrm{Pic}^0(C_2)$, i.e. with half the number of non-trivial $3$-torsion points, that is $(3^4-1)/2=40$.

On the other hand, if $C_2$ is a general curve of genus $2$ its unique non-trivial automorphism is the hyperelliptic involution, which acts as the multiplication by $-1$ on the group $\mathrm{Pic}^0(C_2)$ and hence trivially on the set of its $40$ subgroups of order $3$. Thus, for a general choice of $C_2$, the fibre $\varsigma^{-1}(C_2)$ consists of exactly $40$ distinct points. 
\end{proof}

\begin{remark} \label{rem:Torelli}
Let us denote by $\mathcal{A}_2$ the coarse moduli space of principally polarized abelian surfaces. It is well-known that the Torelli map 
$\tau_2 \colon \mathcal{M}_2 \to \mathcal{A}_2$, 
sending every curve to its polarized Jacobian, is an immersion, see \cite{OS80}. Thus, composing $\tau_2$ with $\varsigma$, we obtain a generically finite dominant morphism 
$\tau_2 \circ \varsigma \colon \mathcal{M} \to \mathcal{A}_2$
of degree $40$, which is the one induced by the deformations of the Albanese map $\alpha \colon S \to \mathrm{Alb}(S)$. Observe that such a morphism is not surjective, because its image does not contain the products of elliptic curves that are not isomorphic to Jacobians.
\end{remark}


\bigskip
\bigskip

Roberto Pignatelli \\ Dipartimento di Matematica, Universit\`{a} di
Trento \\ Via Sommarive, 14 I-38123 Trento (TN), Italy.  \\
\emph{e-mail address:} \verb|Roberto.Pignatelli@unitn.it|

\bigskip

Francesco Polizzi \\ Dipartimento di Matematica e Informatica,
Universit\`{a} della
Calabria \\ Cubo 30B, 87036 Arcavacata di Rende (Cosenza), Italy.\\
\emph{E-mail address:} \verb|polizzi@mat.unical.it|

\end{document}